\newtheorem{thm}{Theorem}
\newtheorem{lem}{Lemma}[section]
\newtheorem{claim}[lem]{Claim}
\newtheorem{prop}[lem]{Proposition}
\newtheorem{conj}[lem]{Conjecture}
\newtheorem{question}[lem]{Question}
\newtheorem{observation}[lem]{Observation}
\theoremstyle{definition}
\newtheorem{definition}[lem]{Definition}
\newtheorem{rem}[lem]{Remark}
\numberwithin{equation}{section}
\newcommand{\Gal}{\mathrm{Gal}}
\newcommand{\Disc}{\mathrm{Disc}}
\newcommand{\Res}{\mathrm{Res}}
\newcommand{\F}{\mathbb{F}}
\newcommand{\Fb}{{\overline{\F}}}
\newcommand{\Z}{\mathbb{Z}}
\newcommand{\chr}{\mathrm{char}\,}
\newcommand{\ab}{{\mathrm{ab}}}
\newcommand{\field}[1]{\mathbb{F}_{#1}}
\newcommand{\fourierco}[1]{\widehat{{#1}}}
\newcommand{\rad}{\mathrm{rad}}
\newcommand{\LegendreP}[2]{\left(\frac{{#1}}{{#2}}\right)}
\newcommand{\intval}[1]{\left\lfloor{#1}\right\rfloor}
\newcommand{\abs}[1]{\left| {#1}\right|}
\newcommand{\lgtq}{{L^{\mathrm{gt}}_1(q)}}
\newcommand{\lgtp}{{L^{\mathrm{gt}}_1(p)}}
\begin{document}

\title{Abhyankar's Affine Arithmetic Conjecture for the Symmetric and Alternating Groups}
\author{Alexei Entin}
\address{Raymond and Beverly Sackler School of Mathematical Sciences, Tel Aviv University,
Tel Aviv 69978, Israel}
\email{aentin@tauex.tau.ac.il (corresponding author)}
\author{Noam Pirani}
\address{Raymond and Beverly Sackler School of Mathematical Sciences, Tel Aviv University,
Tel Aviv 69978, Israel}
\email{noampirani@mail.tau.ac.il}

\subjclass[2020]{12F12, 11R32, 11R58, 68V05}

\maketitle

\begin{abstract} 
We prove that for any prime $p>2$, $q=p^\nu$ a power of $p$, $n\ge p$ and $G=S_n$ or $G=A_n$ 
(symmetric or alternating group) there exists a Galois extension $K/\F_q(T)$ ramified only over $
\infty$ with $\Gal(K/\F_q(T))=G$. This confirms a conjecture of Abhyankar for the case of symmetric and alternating 
groups over finite fields of odd characteristic.
\\ \\
\emph{Key words and phrases.} Galois theory, function fields, ramification.
\end{abstract}

\section{Introduction} Let $k$ be an algebraically closed field and $k(T)$ the univariate rational function field over $k$. It is a well-known and elementary fact that if $\chr k=0$, any finite extension $K/k(T)$ which is ramified over at most a single place is trivial. Equivalently, we have $\pi_1\left(\mathbb{A}^1_k\right)=1$, where $\pi_1$ denotes the \'etale fundamental group and $\mathbb{A}^1_k$ is the affine line over $k$. This reflects the topological fact that the complex plane $\mathbb C$ is simply connected. 
The analogous question for $\chr k=p>0$ is much more interesting. For a finite group $G$ and prime $p$ denote by $p(G)$ the group generated by the $p$-Sylow subgroups of $G$. We say that $G$ is \emph{quasi-$p$} if $G=p(G)$. It is not hard to show using elementary ramification theory that if $K/k(T)$ is finite Galois with $\Gal(K/k(T))=G$ and is ramified over a single point of $\mathbb P^1_k$ (WLOG we may assume it is the point at infinity) then $G$ must be quasi-$p$. In 1957 Abhyankar conjectured that the converse holds.

\begin{conj}[Abhyankar's Conjecture for the affine line]\label{conj:ac} Let $k$ be algebraically closed of characteristic $\chr k=p>0$ and $G=p(G)$ a finite quasi-$p$ group. Then there exists a Galois extension $K/k(T)$ ramified only over $\infty$ such that $\Gal(K/k(T))=G$.\end{conj}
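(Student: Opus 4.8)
The necessity of the quasi-$p$ condition having been noted, the whole content is the converse: producing, for an arbitrary quasi-$p$ group $G$, a connected $G$-Galois cover of $\mathbb{A}^1_k$, equivalently a continuous surjection $\pi_1\left(\mathbb{A}^1_k\right)\twoheadrightarrow G$. The plan is to reduce the construction to a small family of ``building block'' groups that can be realized by hand, and then to assemble the general cover from these pieces by gluing.

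For the building blocks I would begin with $p$-groups, where the cover can be written down explicitly. A cyclic cover of degree $p$ is given by an Artin--Schreier equation $y^p-y=f(T)$ with $f\in k[T]$, which is ramified only over $\infty$; the Witt-vector generalization, together with the fact that the maximal pro-$p$ quotient of $\pi_1\left(\mathbb{A}^1_k\right)$ is free (so that embedding problems with $p$-group kernel are unobstructed), lets one realize every finite $p$-group with branch locus $\{\infty\}$.

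The assembly step is the heart of the matter. I would use Harbater's formal (or rigid analytic) patching: given Galois covers of $\mathbb{P}^1_k$ realizing subgroups $H_1,\dots,H_r$ that generate $G$, one arranges them over disjoint formal discs and glues the corresponding covers along their boundary annuli, producing a cover whose monodromy group is $\langle H_1,\dots,H_r\rangle=G$. The group-theoretic input is a d\'evissage of the quasi-$p$ group $G$ into pieces each of which is either a $p$-group or a quasi-$p$ group of strictly smaller order extended by a cyclic group, so that an induction on $\abs{G}$ can close up. Serre carried this out for solvable $G$, and Raynaud for general $G$; the latter replaces naive patching by a delicate study of the \emph{semistable reduction} of the cover, controlling the components of the special fiber and their ramification so that the total branch locus collapses to $\{\infty\}$. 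This degeneration analysis is where I expect the genuine difficulty to lie: keeping the cover connected with the correct monodromy while forcing all ramification to concentrate at a single point.

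Finally, for the groups relevant to this paper one can hope to bypass the general machinery. Since $A_n$ is quasi-$p$ for every odd $p\le n$, I would instead look for an explicit polynomial $F(T,Y)\in k(T)[Y]$---for instance a trinomial or a suitably twisted additive polynomial---with $\Gal\left(F/k(T)\right)=A_n$ whose ramification, read off from its discriminant and from the Newton polygon at $\infty$, is supported only over $\infty$. The arithmetic refinement over $\F_q$ then accounts for $S_n$, which is \emph{not} quasi-$p$ for odd $p$ (its $p$-Sylow subgroups lie in $A_n$): one adjoins the quadratic constant-field extension realizing $S_n/A_n\cong\Z/2$, exploiting that $\F_q$, unlike $k$, admits nontrivial unramified covers.
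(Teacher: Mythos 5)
The statement you are proving is Conjecture~\ref{conj:ac}, which this paper does not prove at all: it is stated as background and attributed to Raynaud \cite{Ray94}, building on Serre \cite{Ser90}, with the generalization to affine curves due to Harbater \cite{Har94}. So there is no ``paper's own proof'' to compare against; the relevant comparison is with Raynaud's argument, of which your text is a correct high-level summary rather than a proof. The pieces you can actually carry out --- Artin--Schreier and Witt-vector covers of $\mathbb{A}^1_k$ ramified only at $\infty$, and the freeness of the maximal pro-$p$ quotient of $\pi_1\left(\mathbb{A}^1_k\right)$, which disposes of all $p$-groups --- are genuinely the easy part. Everything else is deferred: the d\'evissage of a general quasi-$p$ group, the patching, and above all the semistable-reduction analysis that forces the branch locus of the glued cover to collapse to the single point $\infty$. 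You name this last step as ``where the genuine difficulty lies,'' and that is exactly right; but naming it is not doing it, and without it the patching step only yields covers branched at several points. As written, the proposal is an annotated table of contents for \cite{Ray94}, not an argument.

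Two smaller points. First, the logical order of the patching input is inverted: Harbater's general affine-curve theorem uses Raynaud's affine-line result as an ingredient, so one cannot invoke ``Harbater patching'' as a black box to assemble the affine-line case. Second, your closing paragraph about $S_n$ over $\F_q$ concerns Conjecture~\ref{aac}, not the statement at hand, and the suggested shortcut does not work as stated: composing a geometric $A_n$-extension with the constant-field extension $\F_{q^2}(T)/\F_q(T)$ produces a group $A_n\times C_2$, not $S_n$; obtaining $S_n$ requires the quadratic subextension to sit non-split over the $A_n$-part, which is precisely why the paper goes through $\lgtq$-realizations and the method of removing tame ramification (Proposition~\ref{prop:lgtq_lq}) rather than a constant-field twist.
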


The above conjecture is the prototype special case of the more general \cite{Abh57}*{Conjecture 1}. Conjecture \ref{conj:ac} was settled in a groundbreaking work of Raynaud \cite{Ray94}, building on previous work by Serre \cite{Ser90}. It was extended to the case of a general affine curve by Harbater \cite{Har94}.

Starting in 1992 and throughout the 90's, Abhyankar (with several collaborators) undertook the project of producing explicit equations realizing various "nice" groups as their Galois groups over $\overline \F_p(T)$ with ramification over a single geometric point (or two geometric points). This resulted in a series of papers collectively titled \emph{Nice equations for nice groups}, which covered the symmetric and alternating groups \cite{Abh92, Abh93, AOS94, AbYi94}, groups of the form $\mathcal G(\F_{p^k})$ where $\mathcal G$ is a classical group and $p$ is the characteristic \cite{Abh94,Abh96,Abh96a,Abh96b,AbLo98,AbLo99,Abh99,AbIn01} and some Mathieu groups in characteristic 2,3 \cite{Abh95}. The equations obtained by Abhyankar et al. were defined over finite fields and these investigations led to the statement in \cite{Abh01}*{\S 16} of an arithmetic version of Conjecture \ref{conj:ac}, which is the focus of the present paper.

A finite group $G$ is called \emph{cyclic-by-quasi-$p$} if it is an extension of a cyclic group by a 
quasi-$p$ group. Equivalently, $G/p(G)$ is cyclic. If $q$ is a power of $p$ and $K/\F_q(T)$ is a 
finite Galois extension with $\Gal(K/\F_q(T))=G$ ramified only over $\infty$, then $G$ must be cyclic-by-quasi-$p$ because $
\Gal(K\overline\F_q/\overline \F_q(T))$ is quasi-$p$ and\\ $\Gal(\overline \F_q(T)/\F_q(T))=\Gal(\overline\F_q/\F_q)$ is cyclic.

\begin{conj}\label{aac}[Abhyankar's Arithmetic Conjecture for the affine line] Let $q$ be a power of a prime $p$ and $G$ a finite cyclic-by-quasi-$p$ group. Then there exists a Galois extension $K/\F_q(T)$ ramified only over $\infty$ such that $\Gal(K/\F_q(T))=G$.\end{conj}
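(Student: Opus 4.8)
The plan is to reduce Conjecture \ref{aac} to the already-established geometric Conjecture \ref{conj:ac} together with an arithmetic descent. Write $Q=p(G)$ for the quasi-$p$ part and $C=G/Q$; since $p(G)$ contains every $p$-element of $G$, the cyclic group $C$ has order $m$ prime to $p$. By Conjecture \ref{conj:ac}, established by Raynaud \cite{Ray94} (applied over the algebraically closed field $\Fb_q$), the group $Q$ is a quotient of the geometric fundamental group $\pi_1(\mathbb{A}^1_{\Fb_q})$, i.e.\ realized by an extension ramified only over $\infty$. On the other hand, the tame fundamental group of $\mathbb{A}^1_{\Fb_q}$ is trivial, so the prime-to-$p$ cyclic quotient $C$ cannot arise from geometry and must be supplied by the arithmetic Frobenius through a constant-field extension. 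Hence the entire problem is to solve the embedding problem for the arithmetic fundamental group
\begin{equation*}
1\longrightarrow Q\longrightarrow G\longrightarrow C\longrightarrow 1,
\end{equation*}
where the surjection $\pi_1(\mathbb{A}^1_{\F_q})\twoheadrightarrow C$ is the composite $\pi_1(\mathbb{A}^1_{\F_q})\to\Gal(\Fb_q/\F_q)\to C$ coming from the degree-$m$ constant extension, subject to the demand that the geometric part surject onto $Q$.

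First I would treat the split case $G=Q\rtimes C$. Fix a $Q$-Galois cover of $\mathbb{A}^1$ over $\Fb_q$ as above; by standard spreading-out it is already defined over a finite field $\F_{q^d}$, automatically a constant extension of $\F_q$. The goal is to descend the cover to $\F_q(T)$ so that the arithmetic Frobenius acts on $Q$ through the prescribed homomorphism $C\to\mathrm{Aut}(Q)$: a Frobenius-semilinear action compatible with this homomorphism yields, by Galois descent, a $G$-Galois extension of $\F_q(T)$ whose geometric Galois group is $Q$ and whose full group is $Q\rtimes C$, and which remains ramified only over $\infty$ since constant twists do not alter the branch locus. The nontrivial input is to realize the prescribed action of Frobenius by an actual automorphism of the cover; here Harbater-style patching \cite{Har94} is the natural tool, building the cover by gluing local pieces over the complete local ring at $\infty$ while simultaneously prescribing the branch datum and the Frobenius descent data.

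For a general, possibly non-split, extension $1\to Q\to G\to C\to 1$ the same patching strategy applies, but one must in addition realize a prescribed class in $H^2(C,Z(Q))$ encoding the extension, which requires finer control of the central structure of the cover. The main obstacle is precisely this arithmetic embedding problem: Conjecture \ref{conj:ac} guarantees the geometric cover but gives no control over its field of definition or the induced Frobenius action, and matching an \emph{arbitrary} quasi-$p$ kernel $Q$ to an arbitrary cyclic cokernel $C$ with a prescribed extension class, uniformly in $q$ and $G$, is exactly what is not known in general. I expect patching to reduce the question to the $\mathrm{Frob}$-module structure of the geometric cover together with the solvability of local embedding problems at $\infty$; carrying this out unconditionally for all quasi-$p$ groups is the crux, and is why the present paper instead establishes the conjecture for the concrete families $G=S_n$ and $G=A_n$, where explicit defining equations afford direct control of both the geometric monodromy and the Frobenius action.
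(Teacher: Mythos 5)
You were asked about Conjecture \ref{aac} itself, and the first thing to say is that the paper contains no proof of this statement: it is stated as an open conjecture, and the paper's contribution (Theorem \ref{thm:main}) is to settle only the cases $G=S_n,A_n$ with $n\ge p>2$. Your text is accordingly not a proof, and to your credit it does not pretend to be one --- your final paragraph concedes that carrying out the patching/descent strategy ``unconditionally for all quasi-$p$ groups is the crux.'' That concession is exactly where the genuine gap sits. Nothing in Raynaud \cite{Ray94} controls the field of definition of the geometric $Q$-cover or the action of arithmetic Frobenius on it, and Harbater-style patching \cite{Har94} operates over $\Fb_q(T)$ or over function fields of complete local rings, not over $\F_q(T)$; there is no known patching mechanism that lets you \emph{prescribe} Frobenius descent data while keeping the branch locus at a single $\F_q$-rational point. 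So the step ``realize the prescribed action of Frobenius by an actual automorphism of the cover'' is unproved already in your split case $G=Q\rtimes C$, before the $H^2(C,Z(Q))$ refinement even enters. A secondary inaccuracy: you require the geometric part to surject onto $Q=p(G)$, but a realization of $G$ only forces the geometric monodromy to be some normal quasi-$p$ subgroup $N\leqslant p(G)$ with $G/N$ cyclic; insisting on $N=p(G)$ over-constrains the embedding problem (harmlessly for $S_n,A_n$, where the options are few, but not in general).

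It is also worth noting how different your proposed route is from what the paper actually does in the cases it can handle. Rather than starting from a geometric cover and trying to impose arithmetic structure afterwards, the paper builds the arithmetic in from the start: explicit polynomials over $\F_q$ give geometric, two-point realizations that are tame over one point ($\lgtq$-realizations, via \cite{BEF21_}*{Theorem 5.7} and Proposition \ref{prop:sp1 realization} for $n=p+1$), with the Frobenius action controlled by elementary discriminant computations; the tamely ramified point is then removed by Abhyankar's MRT compositum trick (Proposition \ref{prop:lgtq_lq}), which needs only the soft group-theoretic fact that $[G:p(G)]\in\{1,2\}$ divides $q-1$. Your descent/embedding-problem framing is a reasonable description of why the general conjecture is hard, but as a proof attempt it stops precisely at the open problem, whereas the paper sidesteps that problem entirely for the families it treats.
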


Equivalently, the finite quotients of $\pi_1\left(\mathbb A^1_{\F_q}\right)$ are exactly the cyclic-by-quasi-$p$ groups. For a good survey on Abhyankar's Arithmetic Conjecture and related conjectures 
and partial results see \cite{HOPS17}*{\S 5}. The series of works by Abhyankar et al. cited above 
proves many instances of the conjecture for classical groups of the form $\mathcal G(\F_{p^k})$ ($p=
\chr\F_q$) and also many instances for the symmetric and alternating groups $S_n,A_n,n\ge p$ (for 
$n>2$ the condition $n\ge p$ is equivalent to $S_n$ being cyclic-by-quasi-$p$ and for $n>3$ the same is true for $A_n$), 
however most of their constructions have Galois group either $A_n$ or $S_n$ depending on the value 
of a certain Legendre symbol involving $n,q$ and possibly other parameters. Consequently, for many 
pairs $(n,q)$ Conjecture \ref{aac} had been previously proved for only one of $A_n,S_n$ and it is 
the main goal of the present work to cover both $S_n,A_n$ for all $n\ge p>2$. Our main result is the 
following

\begin{thm}\label{thm:main} Let $q$ be a power of a prime $p>2$, $n\ge p$. Then Conjecture \ref{aac} holds with $G=S_n$ and $G=A_n$.
\end{thm}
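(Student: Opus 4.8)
The plan is to realize both groups by producing, for every $n\ge p$, an explicit monic polynomial $f(T,X)\in\F_q[T][X]$ of degree $n$ in $X$ whose discriminant $\Disc_X f$ is a \emph{nonzero constant} $c\in\F_q^\times$, and then to read off the Galois group from the square class of $c$. Such an $f$ already forces ramification over $\infty$ alone: since $\Disc_X f\in\F_q^\times$, the ring $\F_q[T][X]/(f)$ is finite étale over $\F_q[T]$, so the corresponding cover and its Galois closure $K$ are unramified over every finite place of $\F_q(T)$. Moreover $c$ is a square in $\Fb_q$, hence $\Disc_X f$ is a square in $\Fb_q(T)$, so the geometric monodromy group $\bar G=\Gal(K\Fb_q/\Fb_q(T))$ is automatically contained in $A_n$ (consistent with its being quasi-$p$); by the usual discriminant criterion the arithmetic group $G_{\mathrm{arith}}=\Gal(K/\F_q(T))$ lies in $A_n$ if and only if $c\in(\F_q^\times)^2$. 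Thus, once $\bar G$ is shown to be all of $A_n$, the square class of $c$ decides everything: if $c$ is a square both groups are $A_n$, and if $c$ is a nonsquare then $\bar G=A_n$ is normal of index $2$ in $G_{\mathrm{arith}}$, forcing $G_{\mathrm{arith}}=S_n$ through the constant field extension $\F_{q^2}/\F_q$.

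The core of the argument is therefore to certify $\bar G=A_n$. I would proceed in three steps. First, show $f$ is irreducible over $\Fb_q(T)$, so that $\bar G$ is a transitive subgroup of $S_n$. Second, analyze the unique branch place $\infty$: I would design $f$ so that over the completion at $\infty$ it factors as an Artin--Schreier-type (wildly ramified) factor of degree $p$ times a product of unramified factors of total degree $n-p$, producing in the inertia group $I_\infty\le\bar G$ an element whose image in $S_n$ is a $p$-cycle. Third, establish that $\bar G$ is primitive. Granting primitivity, Jordan's theorem — a primitive subgroup of $S_n$ containing a $p$-cycle that fixes at least three points contains $A_n$ — gives $\bar G\supseteq A_n$, which together with the automatic inclusion $\bar G\subseteq A_n$ yields $\bar G=A_n$.

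To carry this out uniformly in $n$ I would fix a family of candidate polynomials whose shape builds in both étaleness over the affine line and the prescribed wild behavior at $\infty$, leaving free parameters (coefficients ranging over $\F_q[T]$ of bounded degree, together with a scaling constant $a\in\F_q^\times$) to be chosen. The scaling constant is arranged so that $\Disc_X f$ is a fixed constant times an odd power of $a$; running $a$ through a square and a nonsquare then toggles the square class of the discriminant without affecting $\bar G$ (which is insensitive to constant-field data), so $S_n$ and $A_n$ issue from the same geometric cover. The remaining, and genuinely hard, point is to guarantee full geometric monodromy $\bar G=A_n$ — that is, irreducibility together with primitivity and the $p$-cycle — for at least one member of the family in each square class. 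Here I would run a sieve over the family, bounding from below the number of parameter choices that yield an irreducible, primitive cover with the correct local invariant at $\infty$ and showing this count is positive once $q$ exceeds an explicit threshold (the constant $\boundsieve$ in our normalization).

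The principal obstacle is exactly this last step: forcing $\bar G$ to be maximal (all of $A_n$, and not an imprimitive or smaller primitive group) while keeping $\Disc_X f$ a \emph{constant}, since constancy of the discriminant is a global, high-codimension constraint that cannot be imposed by purely local conditions and interacts awkwardly with the sieve. Two boundary issues must also be handled separately. Jordan's theorem needs the $p$-cycle to move at most $n-3$ points, so the range $p\le n\le p+2$ — where a primitive group containing a $p$-cycle could a priori be an affine group or a Mathieu group rather than $A_n$ — requires a dedicated argument excluding these exceptional primitive groups. Finally, the pairs $(n,q)$ with $q$ below the sieve threshold $\boundsieve$ lie outside the asymptotic count and would be dispatched by an explicit, computer-assisted search for suitable polynomials, verifying their ramification and Galois group directly; this is the role I expect the computational component of the proof to play.
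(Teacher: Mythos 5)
Your reduction of the Galois-group identification to the square class of a constant discriminant is sound as far as it goes, but the proposal has a genuine gap at its core: you never establish, and your proposed sieve cannot establish, the existence for every $n\ge p$ of a monic degree-$n$ polynomial $f\in\F_q[T][X]$ with $\Disc_X f\in\F_q^\times$ lying in a \emph{prescribed} square class and with geometric monodromy $A_n$. As you yourself concede, constancy of the discriminant is a global, high-codimension condition; a sieve that counts parameters satisfying local conditions (irreducibility, primitivity, a prescribed factorization type at $\infty$) has no mechanism for restricting to the locus where $\Disc_X f$ is constant, let alone for producing a positive count on that locus. This is not a technical inconvenience but precisely the known obstruction: the explicit one-branch-point equations of Abhyankar (e.g.\ the family (\ref{eq:abh poly}) analyzed in Proposition \ref{prop:mtr}) do exist for various $(n,p)$, but the square class of their discriminant is dictated by a Legendre symbol in the parameters, and arranging for that symbol to take a prescribed value subject to the other constraints is itself a nontrivial number-theoretic problem --- Proposition \ref{prop:legendre} requires a P\'olya--Vinogradov argument, an auxiliary sieve, and a computation up to $7\cdot 10^7$, and even then only handles $n=p+1$. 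For general $n$ no family with a freely togglable sign is known, so the central existence claim of your plan is unsubstantiated. (Two smaller points: the constant $\boundsieve$ in the paper is a threshold on the prime $p$ in that Legendre-symbol problem, not a sieve threshold on $q$; and in your framework the $S_n$-realization is necessarily non-geometric, with constant field $\F_{q^2}$, which is admissible for Conjecture \ref{aac} but should be said explicitly.)

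The paper circumvents the obstruction by demanding much less of the explicit polynomial. It suffices to produce a \emph{geometric} $G$-extension ramified over two rational points and at most tamely over one of them (an $\lgtq$-realization), for which a discriminant of the form $cT^m$ is enough; Proposition \ref{prop:lgtq_lq} then removes the tame ramification by base-changing along the Kummer cover $S^e=T$ and twisting by the constant field extension $\F_{q^e}/\F_q$, yielding an $L(q)$-realization. This is strictly more flexible than insisting on an \'etale cover of $\mathbb A^1$: for $n\neq p+1$ such realizations are already supplied by \cite{BEF21_}, and for $n=p+1$ the single polynomial (\ref{eq:new pol 1}), with discriminant a constant times $T^{p+2}$, realizes $S_{p+1}$ directly (and $A_{p+1}=p(S_{p+1})$ comes for free via Lemma \ref{lem:lgtq}), so the $A_n$-versus-$S_n$ dichotomy never has to be toggled by choosing a square class. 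Allowing a tame branch point also furnishes a transposition in the inertia there, which substitutes for the primitivity-plus-Jordan step of your outline --- a step that in any case fails for $p\le n\le p+2$ and that you would have to replace by a separate analysis of the exceptional primitive groups.
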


The case of $G=A_n,n\neq p+1$ (and also $n=p+1$ if $\F_q\supset\F_{p^2}$) was recently proved in \cite{BEF21_}*{Theorem 1.10}). To treat the case $G=S_n$ we show how to modify certain $S_n$-extensions $K/\F_q(T)$ ramified over two points (of $\mathbb P^1(\F_q)$) constructed in \cite{BEF21_} into $S_n$-extensions with just one ramified point, using Abhyankar's method of removing tame ramification (MRT). This will be done in section \ref{sec:lgtq}, where we also give the proof of Theorem \ref{thm:main} except for one key step required for the case $n=p+1$, which we defer to the subsequent sections.

To treat the case $n=p+1$ we make use of a new construction based on the polynomial  \begin{equation}\label{eq:new pol}f=X^p(X-1)-T(X-4)^{p-1}\left(X-\frac 43\right)\in\F_q[T,X],\end{equation} which was suggested to us by the anonymous referee of a previous version of this paper. Before learning of this more convenient construction, we used a construction due to Abhyankar \cite{Abh92}*{\S 12}, based on polynomials of the form
\begin{equation}\label{eq:abh poly} f=(X+1)\left(X+\frac a{a-1}\right)^p-T^{-a(p+1-a)}X^a\in \F_q[T,X].\end{equation}
 The latter approach is more complicated and excludes the case $G=S_{p+1},\F_q\supset\F_{p^2}$. Nevertheless we decided to include it here because having an alternative construction may prove useful in some future applications (e.g. if a realization is sought with some additional requirements) and its analysis involves a number-theoretic method that may be useful in other contexts (e.g. in studying the tame version of the problems studied here). The approach through the polynomial (\ref{eq:abh poly}) 
requires choosing the parameter $2\le a\le\frac{p-1}2$ so that $(a,p+1)=1$ and then the resulting Galois 
group is $A_n$ or $S_n$ depending on whether $\LegendreP {a(a-1)}p=1$ or $-1$. The existence of a suitable parameter $a$ is guaranteed by the following purely number-theoretic statement.

\begin{prop}\label{prop:legendre}
Let $p>13$ be a prime, $\sigma=\pm 1$. There exists an integer $2\le a\le \frac{p-1}{2}$, such that $(a,p+1)=1$ and $\LegendreP{a(a-1)}{p}=\sigma$.
\end{prop}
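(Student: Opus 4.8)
The plan is to establish the existence of a suitable $a$ by a counting argument: I will show that the number $N_\sigma$ of integers $a$ with $2\le a\le\frac{p-1}{2}$ satisfying both $(a,p+1)=1$ and $\LegendreP{a(a-1)}{p}=\sigma$ is strictly positive. Write $\chi$ for the Legendre symbol modulo $p$. Since $2\le a\le\frac{p-1}{2}<p$ for every $a$ in the range, neither $a$ nor $a-1$ is divisible by $p$, so $\chi(a(a-1))\in\{\pm1\}$ and $\frac12\bigl(1+\sigma\,\chi(a(a-1))\bigr)$ is the indicator of the condition $\LegendreP{a(a-1)}{p}=\sigma$. The coprimality condition I detect by M\"obius inversion, using $\sum_{d\mid\gcd(a,p+1)}\mu(d)=1$ exactly when $(a,p+1)=1$ and $0$ otherwise. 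Combining these and interchanging the order of summation gives
\[
N_\sigma=\frac12\sum_{d\mid p+1}\mu(d)\sum_{\substack{2\le a\le (p-1)/2\\ d\mid a}}\bigl(1+\sigma\,\chi(a(a-1))\bigr).
\]

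The constant term $1$ contributes the main term. For each $d\mid p+1$ one has $\#\{2\le a\le\frac{p-1}{2}:d\mid a\}=\frac{p-1}{2d}+O(1)$, and since every divisor $d$ of $p+1$ is prime to $p$, summing $\mu(d)/d$ over the divisors of $p+1$ yields $\prod_{\ell\mid p+1}(1-\frac1\ell)=\frac{\phi(p+1)}{p+1}$. Hence the main term equals $\frac{p-1}{4}\cdot\frac{\phi(p+1)}{p+1}+O\bigl(2^{\omega(p+1)}\bigr)=\frac{\phi(p+1)}{4}+O\bigl(2^{\omega(p+1)}\bigr)$, where $\omega$ counts distinct prime divisors and the error absorbs the $2^{\omega(p+1)}$ individual $O(1)$ floor discrepancies coming from the squarefree $d$.

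The remaining contribution is a sum of incomplete quadratic character sums. Writing $a=db$, the inner sum becomes $\sum_{b\in I_d}\chi\bigl(db(db-1)\bigr)$ over an interval $I_d$, where $db(db-1)=d^2b^2-db$ is a quadratic in $b$ whose two roots $b\equiv0$ and $b\equiv d^{-1}\pmod p$ are distinct (as $d$ is prime to $p$). Completing the sum and invoking the Weil bound for the complete, additively twisted character sums of this quadratic (the untwisted complete sum is $O(1)$, the nonzero twists are $O(\sqrt p)$), followed by the P\'olya--Vinogradov estimate for the Fourier weights, bounds each inner sum by $O(\sqrt p\,\log p)$. Summing over $d\mid p+1$ gives a total character-sum contribution of $O\bigl(2^{\omega(p+1)}\sqrt p\,\log p\bigr)$. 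Since $2^{\omega(p+1)}\le d(p+1)=p^{o(1)}$ while $\phi(p+1)\gg p/\log\log p$, the main term dominates both error terms once $p$ exceeds an explicit bound, forcing $N_\sigma>0$.

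The main obstacle is not the structure of the argument but making the constants explicit enough to bring the threshold all the way down to $p>13$: the coprimality sieve inflates the Weil error by the factor $2^{\omega(p+1)}$, and for moderate $p$ a crude comparison of $\phi(p+1)/4$ against $C\,2^{\omega(p+1)}\sqrt p\,\log p$ does not yet guarantee positivity. I would therefore split the proof, establishing the inequality analytically for all $p$ above a concrete cutoff $P_0$ by carefully tracking the constants in the Weil and P\'olya--Vinogradov steps, and disposing of the finitely many primes $13<p\le P_0$ by direct computation, exhibiting for each such $p$ and each $\sigma$ an explicit admissible value of $a$.
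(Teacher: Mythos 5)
Your first stage coincides with the paper's: the same M\"obius sieve over the squarefree divisors of $p+1$, the same indicator $\frac12(1+\sigma\chi(a(a-1)))$, the same main term $\asymp\varphi(p+1)$, and the same Weil plus P\'olya--Vinogradov treatment of the incomplete sums, giving an error of size $2^{\omega(p+1)}\sqrt p(\log p+1)$. (One small thing you miss: the substitution $a\mapsto p+1-a$ preserves both conditions, since $(p+1-a)(p-a)\equiv a(a-1)\pmod p$, so one may count over all of $1<a<p$ and gain a factor of $2$ in the main term before descending to the half-range; working directly in $2\le a\le\frac{p-1}2$ as you do costs you that factor.)

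The genuine gap is in your final paragraph. You correctly identify that the factor $2^{\omega(p+1)}$ is the obstacle, but ``carefully tracking the constants'' in this basic sieve does not bring the cutoff $P_0$ anywhere near a feasible computation. The comparison $\varphi(p+1)/2$ versus $2^{\omega(p+1)}\sqrt p(\log p+1)$ succeeds for every admissible $p$ only when $\omega(p+1)\ge 13$ (then $p+1\ge r_1\cdots r_{13}\approx 3\times 10^{14}$ is so large that $\sqrt p$ wins). In the intermediate range $\omega(p+1)\in\{9,10,11\}$ it fails for the smallest admissible $p$ and forces $P_0$ up to roughly $5\times 10^{11}$: e.g.\ for $\omega(p+1)=11$ the smallest candidate has $p+1=r_1\cdots r_{11}\approx 2\times 10^{11}$, where $2^{12}\sqrt p(\log p+1)$ still exceeds $\varphi(p+1)/2$, and positivity is only restored near $p\approx 5.5\times 10^{11}$. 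Exhaustively checking all primes up to that bound is not a realistic computation. The missing idea is the paper's refined (truncated, Bonferroni-type) sieve: sieve fully only by the product $k$ of the $l$ smallest prime divisors of $p+1$ and control the remaining primes $p_1,\dots,p_s$ via $F(p+1)\ge\sum_i F(kp_i)-(s-1)F(k)$, which replaces the factor $2^{\omega(p+1)}$ by $(s+1)2^{l+1}$ at the cost of multiplying the main term by $\delta=1-\sum_i 1/p_i$. Optimizing $l$ for each value of $\omega(p+1)\le 12$ brings the cutoff down to $7\cdot 10^7$, which is what makes the concluding computer search (and hence the full claim $p>13$) actually attainable. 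Without some such device your plan stalls at the explicit-cutoff step.
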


This is easy to prove for $p$ sufficiently large using P\'olya-Vinogradov-type estimates, but doing it for all $p>13$ takes more effort. To achieve this we combine the P\'olya-Vinogradov 
method with an elementary sieve argument to cover the range $p>7\cdot 10^7$ and a computer search to 
deal with $p<7\cdot 10^7$. This is carried out in section \ref{sec:legendre}. Our 
approach here is similar in spirit to (and inspired by) the series of works by S. D. Cohen et al. on 
the existence of primitive elements in finite fields satisfying additional properties (see e.g. 
\cite{Coh85, CoHu03, CoHu03a, CoHu10}). The derivation of the case $n=p+1$ of Theorem \ref{thm:main} from Proposition \ref{prop:legendre} is carried out in section \ref{sec:np1}.

\begin{rem} The present paper focuses on the case $p>2$. For $p=2$, Conjecture \ref{aac} holds for $S_n$. For $A_n$, Conjecture \ref{aac} holds if $\F_q\supset\F_4$ or if $10\neq n\ge 8$ and $n\equiv 0,1,2,6,7\pmod 8$. This can be seen from the proof of \cite{BEF21_}*{Theorem 5.7}, or derived from its statement using Proposition \ref{prop:lgtq_lq} below, similarly to how we derive the case $n\neq p+1,p>2$ in section \ref{sec:lgtq}. We note that the $p=2$ case of \cite{BEF21_}*{Theorem 5.7} is based directly on constructions by Abhyankar, Ou, Sathaye and Yie appearing in \cite{Abh92,AOS94,AbYi94}. The problem for $A_n$ remains open for most other values of $n$ with $p=2$.\end{rem} 

\begin{rem} The proof of Theorem \ref{thm:main} for the case $n=p+2$ uses the classification of finite simple groups (CFSG). All other cases only use 19th century group theory. See Remark \ref{rem:cfsg} for more details.\end{rem}

\subsection{Application to the minimal ramification problem}

As a corollary to the case $G=A_{p+1}$ of Theorem \ref{thm:main} we obtain an improvement to a 
result from \cite{BEF21_} on the minimal ramification problem. Instead of looking for $G$-extensions 
$K/\F_q(T)$ ramified over a single prime divisor of degree 1 as in Conjecture \ref{aac}, one can relax 
the ramification condition to a single prime divisor of any degree, or more generally ask what is 
the minimal number of ramified prime divisors of $\F_q(T)$ occurring in a $G$-extension of $\F_q(T)$. To state the general 
conjecture on this problem we denote by $d(G)$ the minimal number of generators of a finite group $G
$ and by $G^{\ab}$ the abelianization of $G$. We call an extension $L/K$ of univariate function 
fields \emph{geometric} if $K$ and $L$ have the same field of constants. The following conjecture 
generalizes the corresponding conjecture by Boston and Markin over $\mathbb Q$ \cite{BoMa09}*{Conjecture 1.2}.

\begin{conj}[\cite{BEF21_}*{Conjecture 1.4}]\label{conj:minram} Let $G$ be a nontrivial finite group. Then there exists a geometric Galois extension $K/\F_q(T)$ with $\Gal(K/\F_q)=G$ and ramified over at most $r$ prime divisors of $\F_q(T)$ iff $r\ge d\left(G^\ab/p(G^\ab)\right)$.\end{conj}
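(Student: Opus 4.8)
The plan is to prove the two implications separately: the \emph{necessity} direction, that any geometric $G$-extension ramified over $r$ primes forces $r\ge d(G^\ab/p(G^\ab))$, which follows from general ramification theory; and the much deeper \emph{sufficiency} direction, which realizes $G$ with the minimal number of branch points and which I expect to be the main obstacle.

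First I would dispose of necessity. Write $A:=G^\ab/p(G^\ab)$; since $A$ is abelian, $p(A)$, i.e.\ the subgroup generated by its $p$-Sylow, is just the $p$-primary part, so $A$ is exactly the prime-to-$p$ part of $G^\ab$, and $d(A)$ is the quantity on the right-hand side. Let $M/\F_q(T)$ be the subextension of $K$ fixed by the kernel of $G\twoheadrightarrow A$; it is geometric, abelian, of degree prime to $p$ (hence tame), and ramified only in $S=\{\mathfrak p_1,\dots,\mathfrak p_r\}$. Base-changing to $\Fb_q$ keeps $M\Fb_q/\Fb_q(T)$ connected with group $A$ by geometricity, and since $\pi_1\!\left(\mathbb{P}^1_{\Fb_q}\right)=1$ the inertia subgroups at the geometric branch points generate $A$. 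The key point is that each \emph{closed} point $\mathfrak p_i$ contributes only one cyclic generator: geometricity gives $M\cap\Fb_q(T)=\F_q(T)$, so $\Gal\!\left(M\Fb_q/\F_q(T)\right)\cong A\times\Gal(\Fb_q/\F_q)$ is abelian, whence the inertia subgroups at the $\deg\mathfrak p_i$ geometric points over $\mathfrak p_i$, which arithmetic Frobenius permutes transitively, are conjugate in an abelian group, hence equal, and project to a single cyclic $C_i\subseteq A$. Thus $A=\langle C_1,\dots,C_r\rangle$ with each $C_i$ cyclic, so $d(A)\le r$.

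For sufficiency I would build $G$ in two storeys along $1\to p(G)\to G\to \bar G\to 1$, where $\bar G:=G/p(G)$ has order prime to $p$ and satisfies $\bar G^\ab=A$. The crucial observation, dual to the necessity argument, is that a single closed prime of degree $\delta$ supplies $\delta$ distinct geometric branch points whose (cyclic, tame) inertia can generate a large, possibly nonabelian, part of $\bar G$, while contributing only one generator to the abelian shadow $A$. Hence one should realize $\bar G$ geometrically and tamely using exactly $m:=d(A)$ closed primes of sufficiently high degree, arranging the geometric monodromy so that the combined inertia generates all of $\bar G$ and the Frobenius/arithmetic data so that the extension stays geometric with consistent branch-cycle relations, and then solve the embedding problem with quasi-$p$ kernel $p(G)$, whose geometric solvability ramified at a prescribed point is exactly the content of Conjecture \ref{conj:ac} (Raynaud--Harbater), concentrating the new wild ramification at one of the $\mathfrak p_i$ so that no additional prime is branched.

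The main obstacle is this sufficiency step carried out \emph{over $\F_q$} rather than $\Fb_q$ and \emph{geometrically}: one must simultaneously control the geometric monodromy (to obtain all of $G$), the arithmetic structure (to keep the constant field equal to $\F_q$), and the branch locus (to confine it to $m$ primes), and then descend the patched solution of the wild embedding problem to $\F_q(T)$. This is strictly stronger than the inverse Galois problem over $\F_q(T)$ and is open in general; it is precisely the fine ramification control that the explicit constructions of this paper provide in special cases. Indeed Theorem \ref{thm:main} already yields the sufficiency direction for $G=S_n$ and $G=A_n$: for $S_n$ with $p>2$ one has $A=\Z/2$, $m=1$, and the single rational branch point at $\infty$ realizes $S_n$, matching the bound exactly; for the perfect group $A_n$ one has $m=0$, and the single branch point is the unavoidable minimum for any nontrivial geometric extension (a connected unramified cover of $\mathbb{P}^1_{\Fb_q}$ being trivial), which one reads into the statement.
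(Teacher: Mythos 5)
You were asked to prove a statement that the paper itself does not prove and cannot prove: it is Conjecture~\ref{conj:minram}, quoted verbatim from \cite{BEF21_}*{Conjecture 1.4}, and it is open. The paper only establishes the special case stated as Theorem~\ref{thm:minram} ($G$ a product of groups $S_{n_i}$, $A_{n_i}$ with $n_i\ge p>2$), and it does so not by any general embedding or patching argument but by constructing explicit $\lgtq$-realizations (Propositions~\ref{prop:sp1 realization} and~\ref{prop:lgtq}) and feeding them into the mechanism of \cite{BEF21_}*{Theorem 1.8} together with the MRT compositum trick of Proposition~\ref{prop:lgtq_lq}. So the correct response to this prompt is to recognize the statement as a conjecture, which you essentially do; your proposal is honest about where it stops being a proof.

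On the mathematics: your necessity argument is correct and is the standard (known) half of the conjecture. The chain is sound: $M=K^{\ker(G\twoheadrightarrow A)}$ is geometric, abelian, prime-to-$p$, hence tame; geometricity gives $\Gal(M\Fb_q/\F_q(T))\cong A\times\Gal(\Fb_q/\F_q)$, so the (cyclic, tame) inertia subgroups at the geometric points over a fixed closed prime $\mathfrak p_i$, being conjugate under the transitive Galois action, coincide and contribute a single cyclic subgroup $C_i\subset A$; and $\pi_1\bigl(\mathbb P^1_{\Fb_q}\bigr)=1$ forces the $C_i$ to generate $A$, whence $r\ge d(A)$. (You should also make explicit the convention $d(1)=1$ for perfect $G$, which you only gesture at in the last sentence; without it the ``iff'' is false for quasi-$p$ groups, since a nontrivial everywhere-unramified geometric extension would yield a nontrivial connected unramified cover of $\mathbb P^1_{\Fb_q}$.) The genuine gap is the sufficiency direction, and it is not a fixable local defect but the entire open content: Raynaud--Harbater (Conjecture~\ref{conj:ac}) solves the quasi-$p$ embedding problem only over an algebraically closed constant field and gives no control over descent to $\F_q$, over keeping the extension geometric, or over confining the new wild ramification to the prescribed branch locus; even the first storey of your two-storey plan --- realizing the prime-to-$p$ quotient tamely and geometrically over $\F_q$ with exactly $d(A)$ closed primes --- is open in general (compare the paper's Question after Lemma~\ref{lem:lgtq}, i.e.\ \cite{BEF21_}*{Question 5.6}, on $\lgtq$-realizability). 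Your sketch correctly identifies these obstacles, but identifying them is not overcoming them, so what you have is a proof of one implication plus a program, not a proof of the statement.
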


For a more thorough discussion of Conjecture \ref{conj:minram}, the original Boston-Markin conjecture and known partial results see \cite{BEF21_}*{\S 1}. In the case $G=S_n,A_n$ Conjecture \ref{conj:minram} predicts that there exists a geometric $G$-extension $K/\F_q(T)$ ramified over a single prime divisor, for all $n,q$. Significant progress on the $G=A_n,S_n$ case of the conjecture was made in \cite{BEF21_}, but for many values of $n,q$ it remains open. Using one of our auxiliary results, namely Proposition \ref{prop:sp1 realization}, we obtain the following strengthening of \cite{BEF21_}*{Theorem 1.8} in the case of odd characteristic:

\begin{thm}\label{thm:minram} Assume $p>2$, $q$ a power of $p$. Let $G=G_1\times\cdots\times G_m$ be a product with each $G_i$ isomorphic to $S_{n_i}$ or $A_{n_i}$ with $n_i\ge p$.
Then Conjecture \ref{conj:minram} holds for $G$ over $\F_q$.
\end{thm}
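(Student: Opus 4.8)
The plan is to prove Theorem~\ref{thm:minram} by reducing the product case to the single-factor case and then invoking the single-factor realizations guaranteed by Theorem~\ref{thm:main} together with the auxiliary Proposition~\ref{prop:sp1 realization}. First I would observe that since each $G_i$ is $S_{n_i}$ or $A_{n_i}$ with $n_i\ge p>2$, each $G_i$ is nonabelian simple (in the $A_{n_i}$ case, for $n_i\ge 5$) or has a very small abelianization; in all cases $G_i^{\ab}$ is either trivial or $\Z/2\Z$. Since $p>2$, we have $p(G_i^{\ab})=1$, so $G_i^{\ab}/p(G_i^{\ab})=G_i^{\ab}$ is cyclic, and therefore $d\bigl(G^{\ab}/p(G^{\ab})\bigr)=d\bigl(\prod_i G_i^{\ab}\bigr)$ equals the number of indices $i$ for which $G_i^{\ab}\neq 1$, i.e. the number of factors isomorphic to some $S_{n_i}$. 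This computes the numerical target $r$ that Conjecture~\ref{conj:minram} predicts, and the lower bound direction ($r\ge d(\cdots)$ being necessary) is the general elementary fact underlying the conjecture, so the real content is the upper bound: constructing a geometric $G$-extension ramified over at most that many primes.

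Next I would handle each factor separately. For a factor $G_i=A_{n_i}$, Theorem~\ref{thm:main} supplies a geometric $A_{n_i}$-extension of $\F_q(T)$ ramified only over $\infty$, i.e. over a single degree-one prime, contributing $0$ to the essential count in the sense that $A_{n_i}^{\ab}=1$ imposes no lower bound. For a factor $G_i=S_{n_i}$, I would use Proposition~\ref{prop:sp1 realization} (the auxiliary realization result the excerpt promises) to produce an $S_{n_i}$-extension ramified over a single prime divisor; because $S_{n_i}^{\ab}=\Z/2\Z$ is nontrivial, each such factor genuinely requires one ramified prime, matching the count. The key combinatorial step is then to \emph{assemble} these into a single $G$-extension of $\F_q(T)$ whose ramification locus is as small as possible. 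The standard tool is to take the compositum of the individual extensions $K_i/\F_q(T)$ inside a fixed algebraic closure; since the $G_i$ are pairwise nonisomorphic simple-ish groups with coprime orders in the relevant sense (or more robustly, since one can arrange the extensions to be linearly disjoint), the Galois group of the compositum is the direct product $\prod_i G_i=G$, and the ramified primes of the compositum are exactly the union of the ramified primes of the factors.

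The main obstacle, and the step requiring the most care, is controlling the ramification of the compositum so that the total number of ramified primes does not exceed $r=d\bigl(G^{\ab}/p(G^{\ab})\bigr)$. Naively the compositum is ramified over the union of all the individual ramification loci, which could be as large as $m$ distinct primes. The trick is that the $A_{n_i}$-factors can each be realized ramified over the \emph{same} prime (say $\infty$), so they contribute no new ramified primes beyond what is already there, while each $S_{n_i}$-factor is allowed to contribute exactly one prime, and those can be arranged at distinct finite primes. Thus I would realize all $A_{n_i}$-factors and, say, one distinguished $S_{n_i}$-factor with ramification concentrated at $\infty$, and each remaining $S_{n_i}$-factor ramified over its own finite place; the delicate point is ensuring linear disjointness over $\F_q(T)$ (so that $\Gal$ of the compositum is the full product and the constant field does not grow, keeping the extension geometric) simultaneously with this overlapping-ramification arrangement. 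Linear disjointness of the $A_n$ and $S_n$ extensions follows because the groups have no common nontrivial quotients, so no common subextension can arise; verifying that the constant field stays $\F_q$ for each factor (geometricity) is exactly what Theorem~\ref{thm:main} and Proposition~\ref{prop:sp1 realization} provide, and it is preserved under taking composita of geometric linearly disjoint extensions. Once ramification is confined to these $r$ primes and the Galois group is identified as $G$, the upper bound matches the lower bound and the conjecture holds for $G$ over $\F_q$.
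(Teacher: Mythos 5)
Your proposal takes a genuinely different route from the paper --- the paper's entire proof of Theorem \ref{thm:minram} is the observation that the proof of \cite{BEF21_}*{Theorem 1.8} goes through verbatim once every factor $G_i$ is known to be $\lgtq$-realizable, which is exactly what Proposition \ref{prop:lgtq} supplies (the only new case being $n_i=p+1$, handled by Proposition \ref{prop:sp1 realization}) --- but your reconstruction of the compositum argument has genuine gaps. The more serious one concerns your building blocks. Proposition \ref{prop:sp1 realization} produces an $\lgtq$-realization, which by Definition \ref{def:lgtq} is ramified over \emph{two} degree-one primes, not one; and the single-prime $S_{n_i}$-realizations coming from Theorem \ref{thm:main} are necessarily \emph{not geometric}, since $p(S_{n_i})=A_{n_i}$ forces the constant field to grow to $\F_{q^2}$. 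In fact no geometric $S_{n_i}$-extension of $\F_q(T)$ ramified only over $\infty$ (or over any single degree-one prime) exists for $p>2$: its quadratic subextension $\F_q(T)(\sqrt f)$ would be a tame geometric $C_2$-cover of $\mathbb P^1$ branched at one rational point, which is impossible because $\infty$ ramifies in $\F_q(T)(\sqrt f)$ iff $\deg f$ is odd, so a single ramified prime must have even degree. Hence your ``distinguished $S_{n_i}$-factor with ramification concentrated at $\infty$'' cannot be arranged, and however you redistribute branch points you must reconcile the fact that each geometric $S_{n_i}$-factor needs either two ramified primes or one ramified prime of even degree with the target count $d\left(G^{\ab}/p(G^{\ab})\right)$ (for instance, a \emph{single} ramified prime when there is exactly one symmetric factor). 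Doing this bookkeeping correctly, using the specific two-point-with-tame-point structure of $\lgtq$-realizations, is the nontrivial content of the cited proof and is not replaced by anything in your sketch.

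The second gap is the linear-disjointness step for repeated factors. If $G_i\cong G_j$ (say both isomorphic to $A_n$), they certainly share a nontrivial common quotient, namely $A_n$ itself, so ``the groups have no common nontrivial quotients'' is false in that case, and two $A_n$-extensions ramified over the same point could a priori coincide. By simplicity it suffices to produce pairwise \emph{distinct} extensions, but that requires an actual argument (e.g.\ varying a parameter in the defining polynomials and verifying the fields differ), which is again carried out in \cite{BEF21_}. A minor further slip: for $p=3$ and $n_i\in\{3,4\}$ one has $G_i^{\ab}\cong C_3$ rather than $1$ or $C_2$; the count still comes out as the number of symmetric factors because $p(C_3)=C_3$, but your stated reason (``$p(G_i^{\ab})=1$'') is not correct there.
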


In the original statement of \cite{BEF21_}*{Theorem 1.8} the case $G_i=S_{p+1},A_{p+1}$ was excluded. The derivation of Theorem \ref{thm:minram} will be given in section \ref{sec:np1}.

\section{$\lgtq$-realizations and removing tame ramification}\label{sec:lgtq}

We are assuming the reader's familiarity with the basic theory of function fields in one variable over finite fields. Good introductory sources covering the necessary background are \cite{Ros02} and \cite{Sti09}. Throughout the rest of the paper $p$ denotes a prime number and $q$ is a power of $p$. We denote the cyclic group of order $n$ by $C_n$. The symbol $\subset$ denotes non-strict inclusion.

\begin{definition}\label{def:lq} A finite group $G$ is called \emph{$L(q)$-realizable} if there exists a Galois extension $K/\F_q(T)$ with $\Gal(K/\F_q(T))\cong G$ which is unramified outside of a single prime divisor of $\F_q(T)$. An extension with this property is called an \emph{$L(q)$-realization} of $G$.\end{definition}

Note that in Definition \ref{def:lq} we could require WLOG that the ramified point is $\infty$, since we can always move a given point of $\mathbb P^1(\F_q)$ to $\infty$ by an automorphism of $\mathbb P^1_{\F_q}$ (equivalently, by a linear fractional transformation of the variable $T$ with coefficients in $\F_q$). Hence Conjecture \ref{aac} is equivalent to the assertion that $G$ is $L(q)$-realizable iff it is cyclic-by-quasi-$p$. Next we introduce a closely related definition taken from \cite{BEF21_}*{Definition 5.4}. 


\begin{definition}\label{def:lgtq} A finite group $G$ is called \emph{$\lgtq$-realizable} if there exists a geometric Galois extension $K/\F_q(t)$ with $\Gal(K/\F_q(t))\cong G$, $K/\F_q(T)$ is unramified outside of two degree 1 prime divisors of $\F_q(T)$ and at most tamely ramified over one of them. An extension with this property is called an \emph{$\lgtq$-realization} of $G$.\end{definition}

Once again we could require WLOG that the ramified points are $0,\infty$ and the tame ramification is over (say) $\infty$.

\begin{rem} The reason for the notation is Abhyankar's notation $L,L_1$ for the affine line and once-punctured affine line respectively. The letters "gt" stand for geometric and tame. \end{rem}

In the present section we show how to modify $\lgtq$-realizations into $L(q)$-realizations (using Abhyankar's method of removing tame ramification by cyclic compositum, or MRT), which combined with the existence of $\lgtq$-realizations constructed in \cite{BEF21_}*{\S 5} will allow us to prove the $n\neq p+1$ cases of Theorem \ref{thm:main}. First we will need the following

\begin{lem}\label{lem:lgtq} Let $G$ be $\lgtq$-realizable. Then
\begin{enumerate}
\item[(i)] $G$ is cyclic-by-quasi-$p$ and $e=[G:p(G)]$ divides $q-1$.
\item[(ii)] There exists an $\lgtq$-realization $K/\F_q(T)$ with $\Gal(K/\F_q(T))=G$ and $K^{p(G)}=\F_q(S)$ (identifying $G$ with $\Gal(K/\F_q(T))$), where $S^e=T$. The extension $K/\F_q(S)$ is unramified outside the divisor $S=0$.
\end{enumerate}
\end{lem}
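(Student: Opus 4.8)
The plan is to reduce everything to the behaviour of the fixed field $M:=K^{p(G)}$, which is Galois over $\F_q(T)$ with group $\Gal(M/\F_q(T))\cong G/p(G)$ of order $e=[G:p(G)]$ prime to $p$, and which (being a subextension of an $\lgtq$-realization, normalized so that the branch points are $0,\infty$ with tame ramification over $\infty$) is ramified only over $0,\infty$; since $e$ is prime to $p$ it is moreover tame everywhere. For part (i), the cyclicity of $G/p(G)$ I would obtain geometrically: passing to $\Fb_q$ and using that $M$ is geometric, $\Gal(M\Fb_q/\Fb_q(T))\cong\Gal(M/\F_q(T))$, and the latter is a tame quotient of $\pi_1\big(\mathbb P^1_{\Fb_q}\setminus\{0,\infty\}\big)$, whose tame (prime-to-$p$) fundamental group is procyclic. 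Hence $G/p(G)$ is cyclic and $G$ is cyclic-by-quasi-$p$.

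For the divisibility $e\mid q-1$ I would argue arithmetically. Over $\Fb_q$ the cover $M\Fb_q/\Fb_q(T)$ is the unique connected cyclic $e$-cover of $\mathbb P^1$ branched exactly at $0,\infty$, namely $M\Fb_q=\Fb_q(S)$ with $S^e=T$. Set $\Gamma=\Gal(\Fb_q(S)/\F_q(T))$, let $N=\Gal(\Fb_q(S)/\Fb_q(T))=\langle\sigma\rangle$ with $\sigma(S)=\zeta S$ for a primitive $\zeta\in\mu_e$, and let $\phi\in\Gamma$ be the automorphism fixing $S$ and acting as $x\mapsto x^q$ on $\Fb_q$. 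Then $\phi\sigma\phi^{-1}(S)=\zeta^q S$, so $\phi\sigma\phi^{-1}=\sigma^q$ and the commutator $[\phi,\sigma]=\sigma^{q-1}$. Writing $M=\Fb_q(S)^H$, normality of $M/\F_q(T)$ with abelian quotient forces $[\Gamma,\Gamma]\subseteq H$, hence $\sigma^{q-1}\in H$; but geometricity of $M$ gives $N\cap H=1$, so $\sigma^{q-1}=1$ and $e\mid q-1$. This commutator identity is the heart of part (i).

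For part (ii), once $e\mid q-1$ we have $\mu_e\subset\F_q$, so Kummer theory applies over $\F_q(T)$: $M=\F_q(T)(\sqrt[e]{f})$ for some $f\in\F_q(T)^\times$. Ramification only over $0,\infty$ forces $v_\pi(f)\equiv0\pmod e$ for every finite $\pi\neq0$, i.e. $f=cT^{j}$ up to $e$-th powers, and the fact that $M$ is totally ramified over $0$ (inertia is all of $C_e$, as seen from the geometric picture) forces $\gcd(j,e)=1$; replacing the radical by a suitable power lets me take $j=1$, so $M=\F_q(T)(\sqrt[e]{cT})$. Applying the automorphism $T\mapsto\alpha T$ of $\F_q(T)$ with $\alpha\equiv c^{-1}$ modulo $(\F_q^\times)^e$ --- which fixes $0,\infty$ and hence sends one $\lgtq$-realization to another --- normalizes $c$ to an $e$-th power, so that after this coordinate change $M=\F_q(S)$ with $S^e=T$ exactly, as required.

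It remains to show $K/\F_q(S)$ is unramified away from $S=0$. Outside $S=0,\infty$ this is immediate, since $K/\F_q(T)$ is unramified there and $\F_q(S)/\F_q(T)$ is (totally) ramified only over $0,\infty$. The one real obstacle is the place $S=\infty$: the inertia of $K/\F_q(S)$ there equals $I_\infty(K/\F_q(T))\cap p(G)$, where $I_\infty$ is the tame inertia of $K/\F_q(T)$ over $\infty$, and this intersection can a priori be nontrivial --- indeed a quasi-$p$ cover of $\mathbb G_m$ can carry genuine tame ramification over $\infty$ inside $p(G)$. I therefore expect this to be the main difficulty, and I would dispose of it by Abhyankar's method of removing tame ramification (the cyclic-compositum MRT developed in this section): I would modify the realization so as to cancel exactly the $p(G)$-part of the tame inertia over $\infty$, leaving the inertia of $K/\F_q(T)$ over $\infty$ equal to $C_e$ (matching the $\F_q(S)/\F_q(T)$ ramification) while preserving $\Gal(K/\F_q(T))=G$ and the branch locus $\{0,\infty\}$. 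Once $I_\infty\cap p(G)=1$, the place $S=\infty$ is unramified in $K/\F_q(S)$ and the proof is complete.
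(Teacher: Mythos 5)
Your part (i) and the Kummer normal form in part (ii) are fine, and in fact more self-contained than the paper, which proves the whole lemma by citation to \cite{BEF21_}*{Lemma 5.5}: the procyclicity of the tame fundamental group of $\mathbb{G}_m$ gives cyclicity of $G/p(G)$, your commutator identity $\phi\sigma\phi^{-1}=\sigma^q$ correctly forces $e\mid q-1$ from geometricity, and the Kummer-theoretic normalization $M=\F_q(S)$, $S^e=T$ after a coordinate change $T\mapsto\alpha T$ is sound.

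The problem is the last step, which is the real content of part (ii), and there you have a genuine gap rather than a proof. You correctly identify that the obstacle is $I_\infty\cap p(G)\neq 1$, but then write that you ``would dispose of it by Abhyankar's method of removing tame ramification (the cyclic-compositum MRT developed in this section)''. That is circular: the MRT of this section (Proposition \ref{prop:lgtq_lq}) takes the conclusion of Lemma \ref{lem:lgtq}(ii) --- that $K/\F_q(S)$ is unramified outside $S=0$ --- as its input. The obvious non-circular implementation, pulling back along a Kummer cover $W^{m}=T$ with $m=|I_\infty\cap p(G)|$ and invoking Abhyankar's lemma, does not in general preserve the data you need: writing $n=|I_\infty|=em$, the pullback keeps the full Galois group $G$ and geometricity only if $\gcd(m,e)=1$. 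If a prime $\ell$ divides $\gcd(m,e)$, then $\Fb_q(T^{1/\ell})\subset\Fb_q(T^{1/e})=M\Fb_q\subset K\Fb_q$, so the geometric pullback cuts the group down toward $p(G)$, while an arithmetic twist ($W^{\ell}=cT$ with $c$ a nonresidue) preserves the group but forces a degree-$\ell$ constant field extension, destroying geometricity; and $\gcd(m,e)>1$ can genuinely occur (e.g.\ $G/p(G)\cong C_2$ with $I_\infty$ generated by an element of order $4$ not in $p(G)$, such as a $4$-cycle in $S_n$). So ``cancel exactly the $p(G)$-part of the tame inertia over $\infty$ while preserving $\Gal(K/\F_q(T))=G$ and the branch locus'' is precisely the step that requires an argument you have not supplied; as it stands the proposal proves part (i) but not part (ii).
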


\begin{proof} This is a slightly more precise statement of \cite{BEF21_}*{Lemma 5.5} and the proof in \cite{BEF21_} in fact shows this much.\end{proof}

In light of Lemma \ref{lem:lgtq}(i) and by analogy with Conjecture \ref{aac}, it is natural to raise the following

\begin{question}[\cite{BEF21_}*{Question 5.6}] Is a cyclic-by-quasi-$p$ group $G$ with $[G:p(G)]$ dividing $q-1$ always $\lgtq$-realizable?\end{question}

The next proposition will imply that a positive answer to the above question for a given group $G$ satisfying its assumptions implies that Conjecture \ref{aac} holds for $G$ (over $\F_q$).

\begin{prop}[modifying an $\lgtq$-realization into an $L(q)$-realization using MRT] \label{prop:lgtq_lq} Let $G$ be an $\lgtq$-realizable group. Then $G$ is $L(q)$-realizable.\end{prop}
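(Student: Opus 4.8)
The plan is to realize $G$ by Abhyankar's method of removing the tame ramification via a cyclic compositum, followed by descent to a new rational base. By Lemma \ref{lem:lgtq}(ii) I may start from an $\lgtq$-realization $K/\F_q(T)$ with $\Gal(K/\F_q(T))=G$, with $\F_q(S)=K^{p(G)}$ where $S^e=T$ and $e=[G:p(G)]\mid q-1$, and with $K/\F_q(S)$ unramified outside $S=0$. From the tower $K\supset\F_q(S)\supset\F_q(T)$ and the fact that $K/\F_q(S)$ is unramified over $S=\infty$, the extension $K/\F_q(T)$ is tamely ramified at $\infty$ with ramification index exactly $e$, while over $T=0$ all of its ramification lies above the single place $S=0$. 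If $e=1$ then $G=p(G)$ and $K/\F_q(T)=K/\F_q(S)$ is already ramified only over $T=0$, so I may assume $e>1$.

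Next I would introduce the auxiliary Kummer extension $L=\F_q(y)$ given by $y^e=\alpha T$, where $\alpha$ is a generator of the cyclic group $\F_q^*$. Because $e\mid q-1$, this is a geometric cyclic extension of $\F_q(T)$ with group $C_e$; it is rational (as $T=y^e/\alpha$) and tamely ramified exactly over $T=0$ and $T=\infty$, each with a single place above it ($y=0$ and $y=\infty$) and ramification index $e$. I then form the compositum $KL$ and regard it as an extension of the new rational base $L$. The crucial step, which I expect to be the main obstacle since it is the only place where the arithmetic of $\F_q$ and the choice of $\alpha$ really enter, is to check that $\Gal(KL/L)\cong G$, equivalently that $K$ and $L$ are linearly disjoint over $\F_q(T)$. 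The intermediate fields of $L/\F_q(T)$ are the $\F_q(y^{e/d})$ for $d\mid e$, and $(y^{e/d})^d=\alpha T=\alpha S^e$, so $y^{e/d}=\sqrt[d]{\alpha}\,S^{e/d}$ up to an $e$-th root of unity lying in $\F_q$. If such a field with $d>1$ were contained in $K$, then since $S\in K$ we would obtain $\sqrt[d]{\alpha}\in K$; but $\alpha$ is a primitive root, hence a non-$\ell$-th power for every prime $\ell\mid q-1$, so $\sqrt[d]{\alpha}\notin\F_q$ for $d>1$, contradicting the fact that $\F_q$ is algebraically closed in the geometric extension $K$. Hence $K\cap L=\F_q(T)$ and $\Gal(KL/L)\cong\Gal(K/\F_q(T))=G$.

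Finally I would compute the ramification of $KL/L$ place by place. Over any $T=t_0$ with $t_0\neq 0,\infty$ both $K/\F_q(T)$ and $L/\F_q(T)$ are unramified, hence so is $KL/L$. Over $T=\infty$ both extensions are tame with ramification index exactly $e$, so Abhyankar's lemma applied at the place $y=\infty$ of $L$ gives $e(KL/L)=\mathrm{lcm}(e,e)/e=1$; this is precisely the removal of the tame ramification at infinity. Since $L$ has a single place $y=0$ above $T=0$, all remaining ramification of $KL/L$ is concentrated over the single prime divisor $y=0$ (and is genuinely present exactly when $p(G)\neq 1$, arising from the wild ramification of $K$ above $S=0$). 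Thus $KL/\F_q(y)$ has Galois group $G$ and is unramified outside the single prime divisor $y=0$, i.e. it is an $L(q)$-realization of $G$. In the degenerate case $p(G)=1$ one has $G=C_e$, $K=\F_q(S)$ and $KL=\F_{q^e}(y)$, an everywhere-unramified constant-field extension of $L$, which still satisfies Definition \ref{def:lq}.
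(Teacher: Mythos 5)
Your proof is correct and follows essentially the same route as the paper: your auxiliary field $L=\F_q(y)$ with $y^e=\alpha T$ is exactly the paper's $\F_q(\zeta S)$ (take $\alpha=\zeta^e$ with $\zeta$ a primitive $e(q-1)$-th root of unity, so $y=\zeta S$ and $KL=K\F_{q^e}$), and the compositum-and-descent is identical. The only cosmetic differences are that you establish linear disjointness via the Kummer description of the intermediate fields of $L/\F_q(T)$ and kill the ramification at $\infty$ by Abhyankar's lemma, whereas the paper gets both by routing the computation through the constant field extension $\F_{q^e}(S)$; both are valid.
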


\begin{proof} By Lemma \ref{lem:lgtq}, there exists an extension $K/\F_q(T)$ which is geometric, has 
Galois group $\Gal(K/\F_q(T))=G$, with $K^{p(G)}=\F_q(S)$, where $S^e=T$, $e=[G:p(G)]\mid q-1$ and $K/\F_q(S)$ is unramified outside of $S=0$. Let 
$\zeta\in\F_{q^e}$ be a primitive $e(q-1)$-th root of unity. Then $\F_q(\zeta)=\F_{q^e}$ and $
\F_{q^e}(S)=\F_{q^e}(\zeta S)$ is a $C_e$-extension of both $\F_q(S)$ and $\F_q(\zeta S)$, which are 
in turn $C_e$-extensions of $\F_q(T)$ (note that $(\zeta S)^e=\zeta^eT$ and $\zeta^e\in\F_q$).

Since $K/\F_q(T)$ is geometric, $K/\F_q(S)$ is also geometric ($\F_q(S)\supset\F_q(T)$) and is therefore linearly disjoint from $\F_{q^e}(S)/\F_q(S)$. We obtain the following diagram, with each marked extension being Galois with Galois group isomorphic to the indicated group.

\begin{center}
\begin{tikzpicture}

    \node (Q1) at (0,0) {$\F_q(T)$};
    \node (Q2) at (-3,1.5) {$\F_q(S)$};
    \node (Q3) at (-3,4.5) {$K$};
    \node (Q4) at (0,6) {$K\F_{q^e}$};
    \node (Q5) at (0,3) {$\F_{q^e}(S)=\F_{q^e}(\zeta S)$};
    \node (Q6) at (3,1.5) {$\F_q(\zeta S)$};

    \draw (Q1)--(Q2) node [pos=0.6, below,inner sep=0.25cm] {$C_e$};
    \draw (Q2)--(Q3) node [pos=0.5, left,inner sep=0.25cm] {$p(G)$};
    \draw (Q3)--(Q4) node [pos=0.6, below,inner sep=0.25cm] {$C_e$};
    \draw (Q4)--(Q5) node [pos=0.5, right,inner sep=0.25cm] {$p(G)$};
    \draw (Q6)--(Q5) node [pos=0.6, below,inner sep=0.25cm] {$C_e$};
    \draw (Q1)--(Q6) node [pos=0.6, below,inner sep=0.25cm] {$C_e$};
    \draw (Q2)--(Q5) node [pos=0.6, below,inner sep=0.25cm] {$C_e$};

\end{tikzpicture}
\end{center}

Since $K\F_{q^e}=K\F_{q^e}(\zeta S)$ ($S\in K$ and $\F_q(\zeta)=\F_{q^e}$) we have $[K\F_q(\zeta S):K]=[\F_q(\zeta S):\F_q(T)]$ and consequently $K/\F_q(T)$ and $\F_q(\zeta S)/\F_q(T)$ are linearly disjoint. Consequently $K\F_{q^e}/\F_q(\zeta S)$ is Galois with $\Gal(K\F_{q^e}/\F_q(\zeta S))\cong\Gal(K/\F_q(T))=G$. 

Denoting $U=\zeta S$, we claim that $K\F_{q^e}/\F_q(U)$ is the sought $L(q)$-realization of 
$G$. First note that $F_q(S)/\F_q(T)$ is totally and tamely ramified over $T=0,\infty$ (since $
(e,p)=1$) and unramified elsewhere, and recall that $K/\F_q(S)$ is ramified only over $S=0$. Consequently $K
\F_{q^e}/\F_{q^e}(S)$ is also ramified only over $S=0$. Since $\F_{q^e}(S)/\F_q(U)$ is a constant 
field extension, it is unramified everywhere and only $S=0$ lies over $U=0$. Consequently $K
\F_{q^e}/\F_q(U)$ is unramified outside $U=0$ and is therefore an $L(q)$-realization of $G$.
\end{proof}

\begin{prop}\label{prop:lgtq}Let $n\ge p>2$. Then $S_n,A_n$ are $\lgtq$-realizatble.\end{prop}

\begin{proof} We are assuming that $n\ge p>2$. If $n\neq p+1$ then by \cite{BEF21_}*{Theorem 5.7} the groups $S_n,A_n$ are $\lgtq$-realizable. If $n=p+1$ we will construct an $\lgtq$-realization $K/\F_q(t)$ of $G=S_{p+1}$ in Proposition \ref{prop:sp1 realization} below, and by Lemma \ref{lem:lgtq}(ii) $K^{p(G)}/\F_q(t)$ is an $\lgtq$-realization of $p(G)=p(S_{p+1})=A_{p+1}$. \end{proof}

\begin{proof}[Proof of Theorem \ref{thm:main}] Assume $n\ge p>2$. By the last proposition the groups $S_n,A_n$ are $\lgtq$-realizable. We have $p(A_n)=p(S_n)=A_n$ and hence $[G:p(G)]\in\{1,2\}$ divides $q-1$ if $G=S_n,A_n$. By Proposition \ref{prop:lgtq_lq}, $G$ is $L(q)$-realizable, i.e. the assertion of Conjecture \ref{aac} holds.\end{proof}

\begin{proof}[Proof of Theorem \ref{thm:minram}] The original proof of \cite{BEF21_}*{Theorem 1.8} goes through verbatim once one shows that each $G_i$ is $\lgtq$-realizable. This is provided by Proposition \ref{prop:lgtq}.\end{proof}

\begin{rem}\label{rem:cfsg} The proof of \cite{BEF21_}*{Theorem 5.7}, which we used in the proof of Theorem \ref{thm:main} for $n\neq p+1$, relies on the classification of finite simple groups (CFSG) in the case $n=p+2$. It is used to deduce that the Galois group contains $A_n$ from primitivity and the existence of elements with certain cycle structures, via a theorem of Jones \cite{Jon14}*{Corollary 1.3} (which relies on the CFSG). Consequently the case $n=p+2$ of Theorem \ref{thm:main} also relies on CFSG. All other cases of \cite{BEF21_}*{Theorem 5.7} only use elementary group theory, mainly Jordan's theorem \cite{DiMo96}*{Theorem 3.3E}. The proof of \cite{Abh92}*{\S 12.IV.3} (unlike many other similar results in the cited source), which we used in the proof of Proposition \ref{prop:mtr} (i.e. the case $n=p+1$), also involves only elementary group theory, mainly Marggraf's theorem \cite{Wie64}*{Theorem 13.5}. Thus the $n\neq p+2$ cases of Theorems \ref{thm:main} and \ref{thm:minram} do not use the CFSG.\end{rem}

\section{The case $n=p+1$: first construction}
\label{sec:np1 new}

\begin{prop}\label{prop:sp1 realization}\begin{enumerate}\item[(i)] Assume $p>3$ and consider the polynomial
\begin{equation}\label{eq:new pol 1}f=X^p(X-1)-t(X-4)^{p-2}\left(X-\frac 43\right)\in\F_q[T,X].\end{equation}
Then the splitting field $K/\F_q(t)$ of $f$ is an $\lgtq$-realization of $S_{p+1}$ and in particular $S_{p+1}$ is $\lgtq$-realizable.
\item[(ii)] If $p=3$ then the splitting field of $f=X^4+X+T$ in an $\lgtq$-realization of $S_4$.\end{enumerate}\end{prop}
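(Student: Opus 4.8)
The plan is to realize $G=S_{p+1}$ as the arithmetic monodromy group of the degree-$(p+1)$ cover of the $t$-line defined by $f$, and to show that its branch locus consists of exactly the two rational places $t=0$ (wild) and $t=\infty$ (tame). Write $f=h_0(X)-t\,h_1(X)$ with $h_0=X^p(X-1)$ and $h_1=(X-4)^{p-2}\bigl(X-\tfrac43\bigr)$. The first thing I would record is the characteristic-$p$ identity $h_0'=(p+1)X^p-pX^{p-1}=X^p$, so that $f'=X^p-t\,h_1'$ and, dividing $f$ by $f'$, one obtains $f=(X-1)f'+t\,g$ with $g=(X-1)h_1'-h_1$. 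This identity is the organizing principle: any finite branch point other than $t=0$ must arise from a common zero of $f'$ and $g$, and the zeros of $g$ are governed by the factor $(X-4)$ together with a quadratic whose coefficients are controlled by the constants $4,\tfrac43$. Along the way I would check that $f$ is separable and geometrically irreducible over $\F_q(t)$, so that $K$ is the splitting field, $G=\Gal(K/\F_q(t))\hookrightarrow S_{p+1}$, and $G$ acts transitively on the $p+1$ roots.

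Next I would carry out the two local analyses. At $t=0$ one has $f(0,X)=X^p(X-1)$: the simple root $X=1$ gives an unramified point, while the $p$ roots clustering at $X=0$ satisfy $X^p\sim -t\,h_1(0)$, producing a totally and \emph{wildly} ramified place of degree $p$; hence a generator of inertia at $0$ is a $p$-cycle fixing one letter. At $t=\infty$ I would pass to the uniformizer $1/t$ and read off three groups of roots: the two roots tending to $\infty$ (governed by $X^2\sim t$, giving tame ramification of order $2$), the $p-2$ roots approaching $X=4$ (tame of order $p-2$), and the simple unramified root at $X=\tfrac43$. The qualitative outputs I need are that the ramification at $\infty$ is \emph{tame} and that the corresponding inertia generator has cycle type $(p-2,2,1)$, which (as $p$ is odd) is an \emph{odd} permutation.

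With the ramification data in hand, the Galois group is pinned down by $19$th-century permutation-group theory, following Abhyankar \cite{Abh92}*{\S12}. The group $G$ is transitive of degree $p+1$ and contains a $p$-cycle; I would first establish primitivity and then invoke Marggraf's theorem \cite{Wie64}*{Theorem 13.5} to conclude $G\supseteq A_{p+1}$, the competing primitive groups of degree $p+1$ that contain a $p$-cycle (essentially subgroups of $\mathrm{PGL}_2(p)$ acting on $\mathbb P^1(\F_p)$) being excluded by the tame element of type $(p-2,2,1)$, which cannot occur there. The odd permutation at $\infty$ then forces $G=S_{p+1}$, equivalently $\Disc_X f$ is a non-square in $\F_q(t)$. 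Since this argument exhibits $S_{p+1}$ already as the \emph{geometric} monodromy group, the extension is geometric; combined with branch locus $\{0,\infty\}\subset\mathbb P^1(\F_q)$ and tameness at $\infty$, this is precisely an $\lgtq$-realization of $S_{p+1}$. For part (ii), with $p=3$ and $f=X^4+X+T$, I would argue directly: $f'=4X^3+1=(X+1)^3$, the fiber $f(0,X)=X(X+1)^3$ yields a wild $3$-cycle at $T=0$, and all four roots escape to $\infty$ along $X^4\sim -T$, giving a tame $4$-cycle at $T=\infty$; as $\langle\,3\text{-cycle},\,4\text{-cycle}\,\rangle=S_4$ and these generate the geometric monodromy, $f$ is a geometric $\lgtq$-realization of $S_4$.

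The main obstacle is the branch-locus computation of the first step: ruling out \emph{extra} finite ramification. Concretely, one must show that the zeros of $g$ other than $X=4$ are harmless, i.e. that the quadratic factor they contribute to $\Disc_X f$ does not produce genuine ramified places of $K/\F_q(t)$ away from $0,\infty$ — this is exactly what the precise choice of the constants $4$ and $\tfrac43$ (and of the exponent on $X-4$) must guarantee, and it is the delicate heart of the construction. Downstream, the second most delicate point is the exact identification of the tame inertia at $\infty$, since both the group-theoretic step (ruling out $\mathrm{PGL}_2(p)$) and the $S_{p+1}$-versus-$A_{p+1}$ distinction are consumed by the cycle type $(p-2,2,1)$ and its odd sign.
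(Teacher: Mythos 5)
There is a genuine gap, and you have in fact pointed at it yourself: the branch-locus computation that you defer as ``the delicate heart of the construction'' is the actual content of part (i), and your proposal never carries it out. Everything downstream (the local analyses at $0$ and $\infty$, the cycle types, the group theory) is routine once one knows that the only finite branch point is $t=0$; for a generic choice of the constants in place of $4$ and $\tfrac43$ this is simply false, so the claim cannot be waved through. The paper settles it in one line: after computing $f'=X^p+\frac{16}{3}T(X-4)^{p-3}\left(X-\frac43\right)$ one checks the identity
\begin{equation*}
f+\tfrac 3{16}(X-4)\left(X-\tfrac 43\right)f'=\tfrac 3{16}X^{p+2},
\end{equation*}
which comes from the fact that $(X-1)+\tfrac3{16}(X-4)\left(X-\tfrac43\right)=\tfrac3{16}X^2$; hence $\Res(f,f')$ equals a constant times $\Res\left(X^{p+2},f'\right)=f'(0)^{p+2}$, giving $\Disc(f)=(-1)^{(p+1)/2}\tfrac 4{243}T^{p+2}$ and ramification only over $0$ and $\infty$. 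Your decomposition $f=(X-1)f'+tg$ is a weaker version of this Euclidean step (you multiply $f'$ only by the linear factor $X-1$ rather than by the quadratic that makes the remainder a pure power of $X$), and it leaves you with a genuine further resultant to control. A similar, much smaller, omission occurs in part (ii): you compute $f'=(X+1)^3$ but never conclude that $\Res(f,f')=\pm f(-1)^3=\pm T^3$, which is what rules out extra finite branch points there.

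A secondary remark on the group theory: your route (primitivity, then Marggraf's theorem to force $A_{p+1}$, excluding the degree-$(p+1)$ primitive groups with a $p$-cycle inside $\mathrm{PGL}_2(p)$ by the tame $(p-2,2,1)$ element) is workable but contains another unverified exclusion step, and it is more than is needed. Since $p$ is odd, the $(p-2)$-th power of the tame inertia generator at $\infty$ is a transposition, and a transitive group of degree $p+1$ containing a $p$-cycle is $2$-transitive, hence primitive; Jordan's theorem then gives $S_{p+1}$ directly, which is the paper's argument. You already have all the ingredients for this shortcut, so I would recommend it over the Marggraf detour.
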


\begin{proof}{\bf (i).}To find the ramification locus we compute the discriminant of $f$. First we compute the derivative: 
$$f'=X^p+2T(X-4)^{p-3}\left(X-\frac43\right)^2-2T(X-4)^{p-2}\left(X-\frac 43\right)
=X^p+\frac{16}3(X-4)^{p-3}\left(X-\frac 43\right)T.$$
Hence
$$f+\frac 3{16}(X-4)\left(X-\frac 43\right)f'=X^p(X-1)+\frac 3{16}(X-4)\left(X-\frac 43\right)X^p = \frac 3{16}X^{p+2}$$
and
\begin{multline*}\Disc(f)=(-1)^{\frac{p+1}2}\Res(f,f')=(-1)^{\frac{p-1}2}\Res\left(f+\frac 3{16}(X-4)\left(X-\frac 43\right)f',f'\right)\\=(-1)^{\frac{p-1}2}\Res\left(\frac {3}{16}X^{p+2},f'\right)=(-1)^{\frac{p-1}2}\frac {3}{16}f'(0)^{p+2}=(-1)^{\frac{p+1}2}\frac{4}{243}T^{p+2}
\end{multline*}
(for a summary of the basic identities we used for manipulating the resultants and discriminants above see \cite{BEF21_}*{\S 2.4}). From the above it follows that $K/\F_q(T)$ is ramified only over 0 and possibly $\infty$.

Let $\alpha\in K$ be a root of $f$ and note that by (\ref{eq:new pol 1}) we have $$t=\frac{\alpha^p(\alpha-1)}{(\alpha-4)^{p-2}(\alpha-4/3)^2}$$ and hence $L=\F_q(T)(\alpha)=\F_q(\alpha)$ and the ramification degrees of the primes of $L/\F_q(T)$ lying over $0,\infty$ are $(p,1,1)$ and $(p-2,2,1)$ respectively. In particular we see that ramification over $\infty$ is tame. From this it follows (by e.g. \cite{Abh92}*{Cycle Lemma}) that $\Gal(K\Fb_q/\Fb_q(T))=\Gal(f/\Fb_q(T))\leqslant S_{p+1}$ contains both a transposition and a $p$-cycle. The group $\Gal(f/\Fb_q(T))$ is transitive (since $f$ is irreducible being linear in $t$), hence $\Gal(K\Fb_q/\F_q(T))=\Gal(f/\Fb_q(T))=S_{p+1}$ and since $\Gal(K\Fb_q/\Fb_q(T))\leqslant\Gal(K/\F_q(T))\leqslant S_{p+1}$ the extension $K/\F_q(T)$ is geometric with Galois group $S_{p+1}$ and we obtained an $\lgtq$-realization of $S_{p+1}$.

{\bf (ii).} A simple calculation shows $\Disc(f)=T^3$ and then reasoning as in part (i) we see that all ramification is over $0,\infty$, ramification over $\infty$ is tame, $\Gal(f/\Fb_q(T))\leqslant S_4$ is transitive, contains a 3-cycle and a 4-cycle, hence is all of $S_4$. From this it follows as in part (i) that the splitting field $K/\F_q(T)$ of $f$ is an $\lgtq$-realization of $S_4$.
\end{proof}

\section{The case $n=p+1$: second construction}\label{sec:np1}

In the next two sections we give an alternative treatment of the case $n=p+1$ (of Theorem \ref{thm:main}), which is based on a construction from \cite{Abh92}, summarized in the following

\begin{prop}\label{prop:mtr} Assume $p>5$ and let $2\le a\le\frac{p-1}2$ be an integer such that $(a,p+1)=1$. Consider the polynomial
$$f=(X+1)\left(X+\frac a{a-1}\right)^p-T^{-a(p+1-a)}X^a\in \F_q(T)[X]$$
and let $K$ be the splitting field of $f$ over $\F_q(T)$. Then the extension $K/\F_q(T)$ is ramified only over $\infty$ and
$$\Gal(f/\F_q(T))=\Gal(K/\F_q(T))=\left[\begin{array}{ll}A_n,&\LegendreP {a(a-1)}p=1\mbox{ or }\F_q\supset\F_{p^2},\\ \quad \\S_n,&\LegendreP {a(a-1)}p=-1\mbox{ and }\F_q\not\supset\F_{p^2}.\end{array}\right.$$

\end{prop}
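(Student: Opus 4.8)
The plan is to analyze the polynomial $f=(X+1)\left(X+\frac a{a-1}\right)^p-T^{-a(p+1-a)}X^a$ exactly as in Proposition \ref{prop:sp1 realization}, by first pinning down the ramification locus via the discriminant and the associated cycle structures, and then identifying the Galois group as a subgroup of $S_n$ ($n=p+1$) containing enough permutations to force it to be $A_n$ or $S_n$. First I would verify that $f$ is indeed a degree $n=p+1$ polynomial in $X$ (the leading term being $X^{p+1}$) and that it is irreducible over $\F_q(T)$, for instance because the equation can be solved for $T$ as a rational function of $X$, so that $\F_q(T)(\alpha)=\F_q(\alpha)$ for a root $\alpha$, giving a transitive Galois action on the roots. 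Then I would compute $\Disc(f)=\pm c\, T^{m}$ for appropriate constant $c$ and exponent $m$ using the resultant manipulations of \cite{BEF21_}*{\S 2.4}, analogously to the clever combination $f+(\text{cubic})\cdot f'$ trick used above, to conclude that $K/\F_q(T)$ can only ramify over $0$ and $\infty$; a linear-fractional change of variable (as permitted since the ramified points are rational) then moves these to give ramification over a single point $\infty$, matching the claimed statement.

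Next I would determine the ramification data over the two relevant places. By reading off the factorization of the right-hand expression, the exponents $a$ and $p+1-a$ (together with the condition $(a,p+1)=1$) should produce a prime of residue degree forcing a specific cycle type; in particular, the point where $T=\infty$ (or wherever the genuinely wild/tame ramification sits) should contribute a $p$-cycle, while the other behavior contributes permutations whose cycle structure, via Abhyankar's Cycle Lemma \cite{Abh92}*{Cycle Lemma}, yields an element of $S_{p+1}$ that is a product of cycles of lengths $a$ and $p+1-a$. The hypothesis $(a,p+1)=1$ guarantees these factors are coprime so that a suitable power gives a cycle, and the interplay of $a$ and $p+1-a$ ensures primitivity. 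Combining a $p$-cycle (which fixes a point and acts as a $p$-cycle, hence is an $n=p+1$-point permutation of prime-cycle type) with transitivity, I would invoke a Jordan-type theorem (Jordan's theorem \cite{DiMo96}*{Theorem 3.3E} or Marggraf's theorem \cite{Wie64}*{Theorem 13.5}, as Remark \ref{rem:cfsg} indicates the actual argument uses Marggraf) to conclude that the geometric Galois group $\Gal(f/\Fb_q(T))$ is either $A_{p+1}$ or $S_{p+1}$.

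To decide between $A_n$ and $S_n$ geometrically, I would compute the discriminant as an element of $\F_q(T)$ and test whether it is a square: the geometric group lies in $A_n$ iff $\Disc(f)$ is a square in $\Fb_q(T)$. Since $\Disc(f)=\pm c\,T^{m}$, squareness in $\Fb_q(T)$ reduces to the parity of $m$ and the constant $\pm c$ up to squares; the dependence on $\LegendreP{a(a-1)}p$ should emerge precisely from whether $\pm c$ is a square in the constant field. The arithmetic refinement — distinguishing $\Gal(K/\F_q(T))$ from $\Gal(K\Fb_q/\Fb_q(T))$ — comes from comparing the constant field of $K$ with $\F_q$: if $\F_q\supset\F_{p^2}$ then the relevant square class is already realized in $\F_q$, collapsing the arithmetic group onto the geometric one and yielding $A_n$; otherwise the square class of $\pm c$ over $\F_q$ depends on the Legendre symbol $\LegendreP{a(a-1)}p$, giving $S_n$ exactly when it equals $-1$. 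I expect the main obstacle to be this last bookkeeping step: correctly tracking how the discriminant's constant factor, the field of constants of $K$, and the condition $\F_q\supset\F_{p^2}$ together determine the square class, and ensuring that the geometric extension is in fact geometric (so that $\Gal(K\Fb_q/\Fb_q(T))=\Gal(K/\F_q(T))$ when the discriminant is a square over $\F_q$). Everything else is a matter of careful resultant computation and an appeal to the cited classical group-theoretic theorems.
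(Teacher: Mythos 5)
Your overall strategy (pin down the ramification via the discriminant, show the geometric group contains $A_{p+1}$, then use the discriminant criterion over $\F_q$ to decide $A_n$ versus $S_n$) matches the paper's in its second half, but the paper does not reprove the geometric input: it simply cites \cite{Abh92}*{\S 12.IV.3} for the two facts that $K/\F_q(T)$ is ramified \emph{only over $\infty$} and that $\Gal(f/\Fb_q(T))=A_{p+1}$, and cites \cite{Abh92}*{\S 22} for the closed-form discriminant. Your attempt to rederive these contains a genuine gap. A discriminant of the form $c\,T^{m}$ only confines the ramification locus to $\{0,\infty\}$; it does not show the extension is unramified over $T=0$, and your fallback --- moving the branch points by a linear fractional transformation "to give ramification over a single point" --- is false: an automorphism of $\mathbb P^1$ can relocate branch points but can never merge two of them into one. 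Proving unramifiedness over $0$ requires the specific analysis of Abhyankar's construction, and it is exactly this that makes the polynomial an $L(q)$-realization rather than merely an $\lgtq$-type one. Relatedly, your picture of the inertia (a $p$-cycle at one place and an $(a,\,p+1-a)$ pattern at the other, combined via the Cycle Lemma) does not match the actual geometry, where all ramification sits at the single wildly ramified place $\infty$; the Cycle Lemma applies to tame inertia and cannot be used this way here.

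The arithmetic bookkeeping you defer is also where the hypotheses actually do their work, and you attribute $(a,p+1)=1$ to the wrong step. In the paper this condition is used (beyond its role in Abhyankar's cited result) to note that $a$ is odd (since $p+1$ is even), hence the exponents of $a$ and $a-1$ in
$$\Disc(f)=(-1)^{(p+1)/2}\frac{a^{pa-p+a}}{(a-1)^{pa-2p+a-1}}\,T^{-a(p+1-a)(p+1)}$$
are odd, so that $\Disc(f)$ is a square in $\F_q(T)$ iff $u=(-1)^{(p+1)/2}a(a-1)$ is a square in $\F_q$ (the power of $T$ is even because $p+1$ is even). One then checks that $(-1)^{(p+1)/2}$ is always a square mod $p$, which is how $\LegendreP{a(a-1)}{p}$ and the condition $\F_q\supset\F_{p^2}$ enter. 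Your sketch gestures at this but does not identify the parity mechanism, and your use of coprimality for a cycle-type argument is not the relevant one. In short: the $A_n$/$S_n$ dichotomy part of your plan is sound and essentially the paper's, but the geometric and ramification-theoretic foundation as you propose to establish it would fail.
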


\begin{proof} By \cite{Abh92}*{\S 12.IV.3} (taken with $t = a, b = \frac a{a-1}, s = a(p+1-a)$ in the notation of the cited source), under our assumptions the extension $K/\F_q(T)$ is ramified only over $
\infty$ and $\Gal(f/\overline \F_q(T))=A_{p+1}$. Note that $\deg f=p+1$ and hence $A_{p+1}\leqslant
\Gal(f/\overline \F_q(T))\leqslant\Gal(f/\F_q(T))\leqslant S_{p+1}$ and $G=\Gal(f/\F_q(T))$ equals 
$S_{p+1}$ or $A_{p+1}$ depending on whether $G\subset A_{p+1}$, which by the discriminant criterion 
happens iff $\Disc(f)\in\F_q(T)$ is a square.

By \cite{Abh92}*{\S 22} (taken with $\tau = a, Y=T^{-a(p+1-a)}, b=\frac a{a-1}$ in the 
notation of the cited source) we have
\begin{equation}\label{eq:disc_mtr}\Disc(f)=(-1)^{(p+1)/2}\frac{a^{pa-p+a}}{(a-1)^{pa-2p+a-1}}T^{-a(p+1-a)(p+1)}\end{equation}
(note that \cite{Abh92} uses a nonstandard sign convention for the discriminant, while we are using the standard definition of the discriminant as it appears in e.g. \cite{BEF21_}*{\S 2.4}, for which the discriminant criterion applies; in \cite{Abh92} the standard discriminant is called the modified discriminant and denoted by $\Disc^*$).

Since $T^{a(p+1-a)(p+1)}$ is always a square ($p$ is odd), $\Disc(f)$ is a square iff $u=(-1)^{(p+1)/2}a(a-1)$ is a square in $\F_q$ (since $(a,p+1)=1$, $a$ is odd and so are the exponents of $a$ and $a-1$ in (\ref{eq:disc_mtr})), which happens iff $u$ is a square in $\F_p$ or $\F_q\supset\F_{p^2}$. Finally, 
$u$ is a square in $\F_p$ iff $\LegendreP {a(a-1)}p=1$ (note that $(-1)^{(p+1)/2}$ is a square in $\F_p$ whenever $p\equiv 1,3\pmod 4$, i.e. always), which completes the proof.
\end{proof}

Proposition \ref{prop:legendre} guarantees that for $p>13$ a parameter $a$ satisfying the conditions of Proposition \ref{prop:mtr} can be found. The following technical lemma will be used to analyze explicit realizations for the cases $p\le 13$ not covered by Proposition \ref{prop:legendre}.

\begin{lem}\label{lem:realizations} Assume $n\ge p>2,\,p\nmid n$. Let $f\in\F_p[T][X]$ be a monic polynomial (in the variable $X$) and assume that $\Disc(f)=cT^m$, where $c\in\F_p^\times$ and $m\le p-1$. Let $K$ be the splitting field of $f$ over $\F_p(t)$ and $G=\Gal(K/\F_p(T))$.
Then
\begin{enumerate}\item[(i)] $K/\F_p(T)$ is unramified outside $0,\infty$ and is tamely 
ramified over 0.
\item[(ii)] If $m$ is odd and $G=S_n$ then $K/\F_p(T)$ is an $\lgtp$-realization of $S_n$.
\item[(iii)] If $n\ge 5$ and $G=A_n$ then $K/\F_p(T)$ is an $\lgtp$-realization of $A_n$.
\end{enumerate}
\end{lem}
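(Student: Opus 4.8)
The plan is to read part (i) off the discriminant, and then obtain parts (ii) and (iii) by verifying the three requirements of an $\lgtp$-realization from Definition~\ref{def:lgtq}: the Galois group is prescribed, the ramification data is supplied by (i), so the only substantive remaining point will be the geometricity of $K/\F_p(T)$, i.e.\ that $\F_p$ is algebraically closed in $K$.

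For part (i), I would first note that $\Disc(f)=cT^m$ is a nonzero element of $\F_p(T)$, so $f$ is separable and the only finite prime dividing $\Disc(f)$ is $T=0$; hence $K/\F_p(T)$ is unramified outside $0,\infty$. For tameness over $0$ the idea is that wild ramification would force the discriminant valuation to be too large. Writing $L=\F_p(T)(\alpha)$ for a root $\alpha$ of an irreducible factor of $f$, one has $v_0(\Disc f)\ge v_0(\mathrm{disc}(L/\F_p(T)))=\sum_i f_i d_i$, where $e_i,f_i,d_i$ are the ramification index, residue degree, and different exponent of the $i$-th prime of $L$ over $0$. If some prime were wildly ramified, i.e.\ $p\mid e_i$, then $e_i\ge p$ and $d_i\ge e_i\ge p$, giving $m=v_0(\Disc f)\ge p$, contradicting $m\le p-1$. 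Thus every prime of $L$ over $0$ is tame; since $K$ is the compositum of the fields $\F_p(T)(\alpha_j)$ over the roots $\alpha_j$ — each tame over $0$ by the same bound applied to its minimal polynomial (whose discriminant divides $\Disc f$ up to resultant factors with nonnegative valuation) — and a compositum of tamely ramified local extensions is tame, the splitting field $K/\F_p(T)$ is tamely ramified over $0$.

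For parts (ii) and (iii) the ramification hypotheses of an $\lgtp$-realization are exactly what part (i) provides ($0,\infty$ are degree-$1$ primes and $0$ is tame), so it remains only to show geometricity. The field of constants $\F_{p^d}$ of $K$ corresponds to a cyclic quotient $G\twoheadrightarrow C_d$, with $\F_{p^d}(T)/\F_p(T)$ unramified everywhere. In case (iii), $G=A_n$ with $n\ge 5$ is simple, so its only cyclic quotient is trivial; hence $d=1$ and $K/\F_p(T)$ is geometric at once. In case (iii) becomes (ii) with $G=S_n$, the abelianization is $C_2$, so $d\in\{1,2\}$, and if $d=2$ then the everywhere-unramified $\F_{p^2}(T)$ would have to coincide with the unique quadratic subextension $K^{A_n}=\F_p(T)\bigl(\sqrt{\Disc f}\bigr)=\F_p(T)\bigl(\sqrt{cT^m}\bigr)$. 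Since $p>2$ and $m$ is odd, $v_0(cT^m)=m$ is odd, so this quadratic extension ramifies over $T=0$ and cannot equal $\F_{p^2}(T)$; therefore $d=1$ and $K/\F_p(T)$ is geometric.

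The main obstacle is the tameness assertion in (i): establishing that wild ramification over $0$ is incompatible with $m\le p-1$ through the different-exponent bound $d_i\ge e_i\ge p$, and then correctly transporting tameness from the stem field to the whole splitting field via the fact that a compositum of tame local extensions is tame. Once this is in place, (ii) and (iii) follow from the clean dichotomy above, with the parity of $m$ being precisely what excludes the constant quadratic extension in the $S_n$ case and the simplicity of $A_n$ ($n\ge 5$) doing the same automatically in the alternating case.
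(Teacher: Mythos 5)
Your proposal is correct and follows essentially the same route as the paper: ramification locus read off from $\Disc(f)=cT^m$, tameness over $0$ forced by $m\le p-1$ (the paper quotes the criterion $P^{e_P}\mid\Disc(f)$ when $p\mid e_P$, which is the same different-exponent bound you use), and geometricity from the cyclic-quotient structure of $G$, with the odd parity of $m$ ruling out the constant quadratic extension in the $S_n$ case exactly as in the paper's discriminant-criterion argument. The only differences are cosmetic (you phrase the $S_n$ obstruction via ramification of $\F_p(T)(\sqrt{cT^m})$ rather than via $cT^m$ not being a square in $\F_{p^2}(T)$, and you spell out the passage from the stem field to the splitting field).
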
 

\begin{proof}{\bf (i).} Let $L=\F_p(T,\alpha)$, where $\alpha\in\overline{\F_p(T)}$ is a root of $f$. The extension $K$ is the Galois closure of $L$ over $\F_p(T)$, hence it is ramified over the same prime divisors. A finite prime $P$ of $\F_p(T)$ can ramify in $L$ only if $P\mid\Disc(f)$ and moreover if $e_P$ is its ramification index then $P^{e_P-1}\mid\Disc(f)$ and $P^{e_P}\mid\Disc(f)$ if $p|e_P$ (see e.g. \cite{Ros02}*{Corollary 7.10.2}), hence the extension $L/\F_p(T)$ (and therefore $K/\F_p(T)$) is unramified outside of $0,\infty$ and by the assumption $m\le p-1$ we have $e_T<p$ and the extension is tamely ramified over 0.

{\bf(ii).} It remains to show that $K/\F_p(T)$ is geometric. Since the only cyclic quotient of $S_n$ is $C_2$ (with kernel $A_n$), the only possible nontrivial extension of the field of constants of $\F_p(T)$ in $K$ is $\F_{p^2}(T)$, in which case we would have $\Gal(f/\F_{p^2}(T))=\Gal(K/\F_{p^2}(T))=A_n$. But then by the discriminant criterion $\Disc(f)=cT^m$ would be a square in $\F_{p^2}$, contradicting the assumption that $m$ is odd.

{\bf(iii).} In this case $G$ has no cyclic quotients, hence $K/\F_p(T)$ is geometric.
\end{proof}

The following proposition gives an alternative proof of Theorem \ref{thm:main} for the case $n=p+1$, assuming additionally $\F_q\not\supset\F_{p^2}$ if $G=S_{p+1}$.

\begin{prop}\label{prop:np1}Assume $p>2$, $q$ a power of $p$.
\begin{enumerate}\item[(i)] The group $S_{p+1}$ is $L(q)$-realizable if $\F_q\not\supset\F_{p^2}$.
\item[(ii)] The group $A_{p+1}$ is both $\lgtq$-realizable and $L(q)$-realizable.
\end{enumerate}
\end{prop}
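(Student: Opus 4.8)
The plan is to read off both groups from the Abhyankar polynomial of Proposition~\ref{prop:mtr}: for a suitable value of the parameter $a$ the discriminant criterion built into that proposition selects $S_{p+1}$ or $A_{p+1}$, and since the resulting extension is ramified only over $\infty$ it is an $L(q)$-realization directly. The existence of a parameter $a$ carrying the required Legendre symbol is supplied by Proposition~\ref{prop:legendre} as soon as $p>13$; the finitely many primes $p\in\{3,5,7,11,13\}$---where either $p\le 5$ lies outside the range of Proposition~\ref{prop:mtr}, or no admissible $a$ of the needed sign exists---I would settle by hand using explicit polynomials together with Lemma~\ref{lem:realizations}.

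For $p>13$ and part~(i), apply Proposition~\ref{prop:legendre} with $\sigma=-1$ to obtain $2\le a\le\frac{p-1}{2}$ with $(a,p+1)=1$ and $\LegendreP{a(a-1)}{p}=-1$. Since $\F_q\not\supset\F_{p^2}$, Proposition~\ref{prop:mtr} turns the splitting field of the associated $f$ into an extension of $\F_q(T)$ with group $S_{p+1}$ ramified only over $\infty$, that is, an $L(q)$-realization; the hypothesis $\F_q\not\supset\F_{p^2}$ is dictated precisely by this step, which is why part~(i) carries that restriction. For part~(ii), apply Proposition~\ref{prop:legendre} with $\sigma=+1$; Proposition~\ref{prop:mtr} then produces an $A_{p+1}$-extension ramified only over $\infty$, an $L(q)$-realization. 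Because $p+1\ge 5$ the group $A_{p+1}$ is simple and so has no nontrivial cyclic quotient, whence this extension is automatically geometric; being unramified over $0$ it is a fortiori at most tamely ramified there, so it is simultaneously an $\lgtq$-realization, with the two permitted prime divisors taken to be $0$ and $\infty$.

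For the small primes I would fall back on explicit polynomials for every combination of target group and base field not covered above: a quick inspection of the admissible parameters shows, for instance, that $p=11$ forces $a=5$ with $\LegendreP{a(a-1)}{p}=+1$, so Proposition~\ref{prop:mtr} delivers $A_{12}$ but leaves $S_{12}$ outstanding, while $p=3,5$ lie outside Proposition~\ref{prop:mtr} altogether. For each group still to be realized I would exhibit a monic $f\in\F_p[T][X]$ with $\Disc(f)=cT^m$, $c\in\F_p^\times$, $m\le p-1$, and $m$ odd when the target is $S_{p+1}$, whose splitting field over $\F_p(T)$ carries the desired Galois group; Lemma~\ref{lem:realizations} then certifies this splitting field to be an $\lgtp$-realization. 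Since $\lgtp$-realizability base-changes to $\lgtq$-realizability for every power $q$ of $p$---a geometric extension of $\F_p(T)$ keeps its Galois group and ramification locus under the everywhere-unramified constant extension $\F_q(T)/\F_p(T)$, and $0,\infty$ remain of degree $1$---a single polynomial over $\F_p$ settles all $q$ at once, and Proposition~\ref{prop:lgtq_lq} upgrades each $\lgtq$-realization to an $L(q)$-realization, the $A_{p+1}$ cases being $\lgtq$-realizations already.

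The principal obstacle is the bookkeeping for these small primes: for each explicit $f$ one must identify the Galois group over $\F_p(T)$, typically by reading cycle types off the ramification data as in Proposition~\ref{prop:sp1 realization} (a transposition together with a $p$-cycle forces $S_{p+1}$ once transitivity is known, while for $A_{p+1}$ one combines primitivity with a Jordan-- or Marggraf-type theorem, or simply computes the group directly since the degrees are small and fixed). The genuinely exceptional case is $A_4$ at $p=3$, where $n=4<5$ falls outside Lemma~\ref{lem:realizations}(iii) and $A_4$ has the cyclic quotient $C_3$, so geometricity is not automatic. Here I would verify geometricity directly: the geometric Galois group $N$ is a normal subgroup of $A_4$ with $A_4/N$ cyclic, so $N\in\{V_4,A_4\}$; and $N=V_4$ is excluded because a cover whose group has order prime to $3$ and which is unramified away from $0,\infty$ is tamely ramified everywhere and therefore has cyclic geometric Galois group, whereas $V_4$ is noncyclic. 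Hence $N=A_4$ and the extension is geometric.
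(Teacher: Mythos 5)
Your overall strategy coincides with the paper's: Proposition~\ref{prop:legendre} feeds Proposition~\ref{prop:mtr} for $p>13$, explicit polynomials certified by Lemma~\ref{lem:realizations} cover the small primes, constant-field base change handles general $q$, and Proposition~\ref{prop:lgtq_lq} upgrades $\lgtq$- to $L(q)$-realizations. The one genuinely different branch is your treatment of the small-prime alternating groups not reachable through Proposition~\ref{prop:mtr}. The paper never hunts for separate $A_4$- or $A_6$-polynomials: it realizes $S_{p+1}$ and then invokes Lemma~\ref{lem:lgtq}(ii) to extract $A_{p+1}=p(S_{p+1})$ as $\Gal(K/\F_q(S))$ with $S^e=T$, which is automatically geometric and ramified over a single place; this sidesteps entirely the geometricity worry for $A_4$ that you correctly identify ($n=4<5$ and $A_4/V_4\cong C_3$). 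Your substitute argument --- that a $V_4$ geometric group would force a tame cover of $\mathbb{G}_m$ over $\overline{\F}_3$ with noncyclic group, contradicting procyclicity of the tame fundamental group --- is valid, but it leaves you needing to actually exhibit $A_4$- and $A_6$-polynomials that the paper's table does not contain, so your route carries extra unverified computational data. A second caveat in the same direction: your plan insists on $\Disc(f)=cT^m$ with $m\le p-1$ so that Lemma~\ref{lem:realizations} applies, but for $p=3$ this forces $m\in\{1,2\}$, and the paper's own $S_4$ example $X^4+X+T$ has $\Disc=T^3$ with $m=3>p-1$; the paper must therefore certify that case by a direct ramification analysis (Proposition~\ref{prop:sp1 realization}(ii)) rather than by Lemma~\ref{lem:realizations}, and you would likely have to do the same. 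Neither issue is a conceptual error --- everything you assert is correct, and your direct application of Proposition~\ref{prop:mtr} over $\F_q$ for $p>13$ is if anything cleaner than the paper's detour through $\F_p$ --- but the Lemma~\ref{lem:lgtq} descent from $S_{p+1}$ to $A_{p+1}$ is the device that lets the paper close the small-prime cases with only the five polynomials it lists, and you would do well to adopt it.
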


\begin{proof} First we treat the case $q=p$. If $p>13$, by Proposition \ref{prop:legendre} one can find $2\le a\le\frac{p-1}2$ such that $(a,p+1)=1$ and $\LegendreP {a(a-1)}p$ takes any specified value $\pm 1$. Proposition \ref{prop:mtr} then gives the requisite $L(p)$-realizations of $S_{p+1},A_{p+1}$. Since $A_{p+1}$ is simple and has no cyclic quotients, any $A_{p+1}$-extension of $\F_p(T)$ is geometric and therefore the above $L(p)$-realizations of $A_{p+1}$ are $\lgtp$-realizations. 

Similarly, taking $(p,a)=(7,3),(11,5),(13,3)$ in Proposition \ref{prop:mtr} one obtains the requisite realizations for the pairs $(G,p)=(S_8,7),(A_{12},11),(S_{14},13)$. For the remaining $p\le 13$ cases, $\lgtp$-realizations of $S_4,S_6,A_8,S_{12},S_{14}$ respectively (as splitting fields of the polynomial $f$) are listed in the table below. They were found using a quick computer search. 
\\
\quad
\\
\begin{tabular}{|l|l|l|l|l|l|}
\hline
$p$&$f$&$\Disc(f)$&$\Gal(f/\F_p(T))$\\
\hline$3$ & $X^4 + X + T$&$T^3$& $S_4$\\
\hline$5$ & $X^6 + X^5 + 3X^3 + TX+T$ & $4T^3$ & $S_6$\\
\hline$7$ & $X^8 + 3X^2 + TX - 2$ & $4T^2$ & $A_8$\\
\hline$11$&$X^{12} + X^3 + 3X^2 + (T + 4)X + 1$&$2T^3$ &$S_{12}$\\
\hline$13$&$X^{14} + 7X^3 + 10X^2 + (9T^2 + 7)X + 1$&$-T^6$  & $A_{14}$\\
\hline
\end{tabular}
\\
\quad
\\
Here the discriminants and Galois groups were computed using the Magma Calculator\footnote{\url{http://magma.maths.usyd.edu.au/calc/}}. By Lemma \ref{lem:realizations} these polynomials indeed define $\lgtp$-realizations, except possibly for $f=X^4 + X + T$ which defines an $\lgtp$-realization by Proposition \ref{prop:lgtq}(ii). 
Finally, by Lemma \ref{lem:lgtq} the group $A_6=p(S_6)$ is also $\lgtp$-realizable and by Proposition \ref{prop:lgtq_lq} (and the observation above that for $p>3$ any $L(p)$-realization of $A_{p+1}$ is an $\lgtp$-realization since $A_{p+1}$ has no cyclic quotients) the groups $S_{p+1}$ with $p\le 13$ are $L(p)$-realizable and the groups $A_{p+1}$ with $p\le 13$ are $\lgtp$-realizable and also $L(p)$-realizable. 

We have shown that $A_{p+1},S_{p+1}$ are $L(p)$-realizable for all $p>2$. Finally we observe that since $S_{p+1}$ has no odd cyclic quotients, an $L(p)$-realization $K/\F_p(T)$ is linearly disjoint from $\F_q(T)$ if $\F_q\not\supset\F_{p^2}$ and $K\F_q/\F_q(T)$ is an $L(q)$-realization. Similarly, since $A_{p+1},p>3$ has no cyclic quotients any $L(p)$-realization $K/\F_p(T)$ is geometric (hence an $\lgtp$-realization) and then $K\F_q/\F_q(T)$ is an $\lgtq$-realization. In the case $p=3$ the same argument works starting with an $\lgtp$-realization (which by definition is geometric), which we have shown above to exist. This concludes the proof.

\end{proof}

\section{Legendre symbols with a GCD condition: proof of Proposition \ref{prop:legendre}}\label{sec:legendre}

Throughout the section $p> 13$ is a prime, $\sigma=\pm 1$. We want to show that for all such $p$ and $\sigma$ there exists an integer $2\le a\le\frac{p-1}2$ such that $\LegendreP {a(a-1)}p=\sigma$ and $(a,p+1)=1$. This is not hard to do for $p$ large enough using the P\'olya-Vinogradov method and available bounds on $n/\varphi(n)$ ($\varphi$ denotes the Euler totient function), however proving the assertion for all $p$ requires a more delicate sieve argument and a computer search for $p<7\cdot 10^7$ (the purpose of the sieve argument is to reduce the search to a manageable range).
The present section is purely number-theoretic and independent of the rest of the paper.

\subsection{Basic sieve}

\begin{observation}
To prove the assertion of Proposition \ref{prop:legendre} it suffices to find $1\le a<p$ such that $(a,p+1)$ and $\LegendreP{a(a-1)}{p}=\sigma$.
\end{observation}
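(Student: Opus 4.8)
The plan is to construct an explicit involution on the admissible values of $a$ that simultaneously preserves the coprimality condition $(a,p+1)=1$ and the value of the Legendre symbol $\LegendreP{a(a-1)}{p}$, and to use it to \emph{fold} any solution lying in the wide range $1\le a<p$ into the narrow range $2\le a\le\frac{p-1}{2}$ required by Proposition \ref{prop:legendre}. So I would assume given an integer $1\le a<p$ with $(a,p+1)=1$ and $\LegendreP{a(a-1)}{p}=\sigma$, and produce from it an integer in $[2,\frac{p-1}{2}]$ with the same two properties.

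First I would note that the hypothesis $\LegendreP{a(a-1)}{p}=\sigma=\pm1$ already forces $a(a-1)\not\equiv 0\pmod p$, hence $a\not\equiv 0,1\pmod p$; combined with $1\le a<p$ this gives $2\le a\le p-1$. The key object is then the map $\iota(a)=p+1-a$, for which I would record two invariance properties. Since $p+1-a\equiv 1-a\pmod p$, a one-line computation yields $(p+1-a)(p+1-a-1)\equiv(1-a)(-a)=a(a-1)\pmod p$, so $\LegendreP{\iota(a)\bigl(\iota(a)-1\bigr)}{p}=\LegendreP{a(a-1)}{p}$; and $\gcd(p+1-a,\,p+1)=\gcd(-a,\,p+1)=\gcd(a,\,p+1)$, so $\iota$ preserves coprimality with $p+1$. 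Thus $\iota$ sends solutions to solutions.

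With these invariances, the case $a\le\frac{p-1}{2}$ is already done. In the remaining case $a>\frac{p-1}{2}$ I would set $a'=\iota(a)=p+1-a$; it inherits both required conditions, and $a'\ge 2$ because $a\le p-1$. The one point needing care — and the main (minor) obstacle — is to upgrade the crude bound $a'\le\frac{p+1}{2}$ to the required $a'\le\frac{p-1}{2}$. This is exactly where the coprimality hypothesis is used: the midpoint $a=\frac{p+1}{2}$ (an integer since $p$ is odd) is the unique fixed point of $\iota$, and it satisfies $\gcd\!\left(\tfrac{p+1}{2},\,p+1\right)=\tfrac{p+1}{2}>1$, so it is ruled out by $(a,p+1)=1$. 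Hence $a>\frac{p-1}{2}$ forces $a\ge\frac{p+1}{2}$, and excluding the fixed point gives $a\ge\frac{p+3}{2}$, whence $a'=p+1-a\le\frac{p-1}{2}$. This places $a'$ in $[2,\frac{p-1}{2}]$ with $(a',p+1)=1$ and $\LegendreP{a'(a'-1)}{p}=\sigma$, which is precisely the conclusion of Proposition \ref{prop:legendre}.
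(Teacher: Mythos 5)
Your proof is correct and follows essentially the same route as the paper: the paper's argument also reflects a solution in the range $\frac{p+1}{2}<a<p$ to $b=p+1-a$, noting that $a\neq\frac{p+1}{2}$ is forced by $(a,p+1)=1$. You simply spell out the two invariance checks (Legendre symbol and gcd under $a\mapsto p+1-a$) and the exclusion of $a=1$ more explicitly than the paper does.
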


\begin{proof}
Indeed if one finds $a$ as above but it falls in the range $\frac{p+1}2<a<p$ (note that $a\neq\frac{p+1}2$ because $(a,p+1)=1$) then $b=p+1-a$ satisfies the assertion of Proposition \ref{prop:legendre}.
\end{proof}

Now we turn to prove the existence of some $1<a<p$ with $\LegendreP{a(a-1)}{p}=\sigma$, and $(a,p+1)=1$. In what follows we fix $\sigma=\pm 1$.
We note that by inclusion-exclusion
\begin{equation}\label{eq:count_nd}
\#\left\{a\in\field{p}:\LegendreP{a(a-1)}{p}=\sigma, (a,p+1)=1\right\}=\sum_{d|p+1}\mu\left(d\right)N(d),
\end{equation}
where $\mu$ denotes the M\"obius function and 
\begin{multline}\label{eq:nd} N(d)=\#\left\{1<a<p:d|a,\LegendreP{a(a-1)}{p}=\sigma\right\}=\frac{1}{2}\sum_{1<a<p\atop{d|a}}\left(1+\sigma\cdot\LegendreP{a(a-1)}{p}\right)=\\=\frac{1}{2}\intval{\frac{p-1}{d}}+\frac{1}{2}\sum_{m=0}^{\intval{p/d}}{\eta_d}(m).\end{multline}
Here for $d|p+1$ (and hence $d$ coprime with $p$) and $m\in\Z$ we denote $${\eta_d}(m)=\sigma\cdot\LegendreP{dm(dm-1)}{p}=\sigma\cdot\LegendreP{m(m-1/d)}{p}.$$

\begin{claim}
We have $\abs{\sum_{m=0}^{\intval{p/d}}{\eta_d}(m)}\le 2\sqrt{p}(\log p+1)-2$.
\end{claim}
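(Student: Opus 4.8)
The plan is to bound the sum by the completion method, which is the analytic core of the P\'olya--Vinogradov inequality, with the Weil bound supplying the cancellation in the resulting complete sums. Write $\chi=\LegendreP{\cdot}{p}$ for the quadratic character mod $p$ and let $\bar d$ denote the inverse of $d$ modulo $p$ (which exists since $d\mid p+1$ is coprime to $p$). Because the absolute value discards the sign $\sigma$, it suffices to bound $\abs S$, where $S=\sum_{m=0}^{M}\chi\bigl(g(m)\bigr)$, $M=\intval{p/d}$ and $g(X)=X(X-\bar d)=X^2-\bar d X$. The polynomial $g$ has the two distinct roots $0,\bar d$ in $\field{p}$, so $\chi\circ g$ is a genuinely quadratic character sum (not the square of anything), and the term $m=0$ contributes $\chi(0)=0$.

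First I would apply completion. Setting $e_p(x)=e^{2\pi i x/p}$, expand the indicator of $\{0,\dots,M\}$ in additive characters to get $S=\frac1p\sum_{b=0}^{p-1}I(b)W(b)$, where $I(b)=\sum_{t=0}^{M}e_p(-bt)$ and $W(b)=\sum_{m=0}^{p-1}\chi(g(m))e_p(bm)$. I then estimate the two factors. For the complete sums, the classical evaluation of the quadratic character of a quadratic with nonzero discriminant gives $W(0)=-1$, while the Weil bound for a multiplicative character of a separable quadratic twisted by a nontrivial additive character gives $\abs{W(b)}\le 2\sqrt p$ for $b\neq 0$. For the geometric sums I would use $I(0)=M+1$ and $\abs{I(b)}\le 1/\abs{\sin(\pi b/p)}$ for $b\neq 0$. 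Combining, the $b=0$ term contributes $-(M+1)/p$ and the rest is controlled by the cosecant sum, yielding
$$\abs S\le \frac{M+1}{p}+\frac{2\sqrt p}{p}\sum_{b=1}^{p-1}\frac{1}{\abs{\sin(\pi b/p)}}.$$

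Next I would estimate the cosecant sum using $\sin(\pi a/p)\ge 2a/p$ for $1\le a\le\frac{p-1}2$ together with the symmetry $a\leftrightarrow p-a$, obtaining $\sum_{b=1}^{p-1}\csc(\pi b/p)\le p\sum_{a=1}^{(p-1)/2}\frac1a\le p(\log p+1-\log 2)$. This gives $\abs S\le 2\sqrt p(\log p+1)-2\sqrt p\log 2+(M+1)/p$, after which only bookkeeping remains: since $M\le p$ we have $(M+1)/p\le 1+1/p<2$, so it suffices that $2\sqrt p\log 2\ge 4$, which holds for all $p>13$, and hence $-2\sqrt p\log 2+(M+1)/p\le -2$. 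This produces exactly the stated bound $\abs S\le 2\sqrt p(\log p+1)-2$.

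The main obstacle is not the cancellation itself, which is routine once the Weil bound is invoked, but pinning down the constant so that the clean $-2$ survives. That term only appears because of the $-\log 2$ gained in the harmonic-sum estimate and because the hypothesis $p>13$ is large enough for the negative term $-2\sqrt p\log 2$ to absorb the positive lower-order contributions; a coarser treatment of either the cosecant sum or the geometric sums would leave a weaker inequality.
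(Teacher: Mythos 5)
Your argument is correct and is essentially the paper's own proof: both complete the incomplete sum against additive characters (your $S=\frac1p\sum_b I(b)W(b)$ is the paper's Parseval identity for the DFTs of $\eta_d$ and $\mathds 1_{[0,H]}$), invoke the Weil bound $\abs{W(b)}\le 2\sqrt p$ for the twisted complete sums, bound the geometric sums by $1/\abs{\sin(\pi b/p)}$, and extract the final constant from the harmonic sum over $\abs{b}\le\frac{p-1}2$, with the $-2$ absorbed exactly as you describe. The only cosmetic difference is that you evaluate $W(0)=-1$ exactly where the paper just bounds the $b=0$ Fourier coefficient by $1/p$; the resulting estimates coincide.
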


\begin{proof}

We consider the discrete Fourier transform modulo $p$ of ${\eta_d}(m)$ and the characteristic function of the interval $\mathds 1_{[0,H]}, H=\intval{\frac pd}$:
$$\fourierco{{\eta_d}}(k)=\frac{1}{p}\sum_{m=0}^{p-1}{\eta_d}(m)e^{-2\pi ikm/p},\quad 
\fourierco{\mathds 1}_{[0,H]}(k)=\frac 1p\sum_{m=0}^He^{-2\pi ikm/p}=\frac{e^{-2\pi i k(H+1)/p}-1}{p
\left(e^{-2\pi i k/p}-1\right)},\quad -\frac{p-1}2< k\le\frac{p-1}2.$$
By the Weil bound (and an elementary bound for $k=0$) we have  $$\abs{\fourierco{{\eta_d}}(k)}\le \frac{2}{\sqrt{p}}\quad (k\neq 0),\quad\abs{\fourierco{{\eta_d}}(0)}\le \frac 1p$$ (apply \cite{CaMo00}*{Main Theorem} with $f=x,g=x(x-1/d),\chi=\LegendreP{\cdot}{p},\psi=e^{2\pi i\cdot}$ in the notation of the cited theorem) and by elementary calculus
$$\left|\fourierco{\mathds 1}_{[0,H]}(k)\right|\le \frac 1{p\sin{\frac {\pi |k|}p}}\le\frac 1{|2k|}\quad \left(0<|k|\le\frac{p-1}2\right).$$ 
Hence by Parseval's identity,
$$
\abs{\sum_{m=0}^{\intval{\frac{p}{d}}}{\eta_d}(m)}=\abs{\sum_{m=0}^{p-1}\eta_d(m)\mathds 1_{[0,H]}(m)}=\abs{
p\sum_{k=-(p-3)/2}^{(p-1)/2}\fourierco{\eta_d}(k)\fourierco{\mathds 1}_{[0,H]}(k)}\le 1+2\sqrt p\sum_{k=1}^{(p-1)/2}\frac 1k\le 2\sqrt p(\log p+1)-2
$$
(for $p\ge 13$).
\end{proof}

By (\ref{eq:nd}) and the last claim, 
\begin{equation}\label{eq:nd_est}N(d)=\frac{p+1}{2d}+\xi(d),\end{equation} where $|\xi(d)|\le\sqrt{p}(\log p+1)$. Now by (\ref{eq:count_nd}),
\begin{equation}\label{eq:count1}
\#\left\{a\in\field{p}:\LegendreP{a(a-1)}{p}=\sigma,(a,p+1)=1\right\}=\frac{p+1}{2}\sum_{d|p+1}\frac{\mu(d)}d+\sum_{d|p+1}\mu(d)\xi(d)
=\frac{\varphi(p+1)}{2}+\sum_{d|p+1}\mu(d)\xi(d),
\end{equation}
where $\varphi$ is Euler's totient function. 

For a natural number $n$ we denote by $\omega(n)=\sum_{\ell|n\atop\mathrm{prime}}1$ its number of prime divisors and $W(n)=2^{\omega(n)}$. By (\ref{eq:count1}),
\begin{multline}
\label{eq:count2}
\#\left\{a\in\field{p}:\LegendreP{a(a-1)}{p}=\sigma, (a,p+1)=1\right\}
\ge
\frac{\varphi(p+1)}{2}-\sqrt{p}(\log p+1)\sum_{d|p+1}\mu(d)^2=\\=\frac{\varphi(p+1)}{2}-\sqrt{p}(\log p+1) W(p+1).
\end{multline}

%

\subsection{Proof for the case $\omega(p+1)\ge 13$}

In the present section we assume $\omega(p+1)\ge 13$ and prove Proposition \ref{prop:legendre} for this case using the estimates from the previous subsection. By (\ref{eq:count2}) the assertion follows at once from

\begin{lem}Let $N$ be a natural number with $\omega(N)\ge 13$. Then $\varphi(N)>2\sqrt N(\log N+1) W(N)$.\end{lem}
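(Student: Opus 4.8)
The plan is to prove the inequality $\varphi(N) > 2\sqrt{N}(\log N + 1)W(N)$ for all $N$ with $\omega(N) \geq 13$ by isolating the multiplicative structure and reducing to a statement about the smallest prime divisors of $N$. Writing $N = \prod_{i=1}^{r} \ell_i^{a_i}$ with $r = \omega(N) \geq 13$ and distinct primes $\ell_1 < \ell_2 < \cdots < \ell_r$, I would first recast the target inequality in the multiplicative form $\varphi(N)/(W(N)\sqrt{N}) > 2(\log N + 1)$ and study the left-hand side. The key observation is that $\varphi(N)/\sqrt{N} = \prod_i \ell_i^{a_i/2}(1 - 1/\ell_i)$, which is an increasing function of each exponent $a_i$ (since $\ell^{1/2}(1-1/\ell) > 1$ for every prime $\ell \geq 2$, indeed $\ell^{1/2}(1-1/\ell) \geq \sqrt{2}/2 \cdot \ldots$ — one checks $\ell \geq 2$ gives the factor at least $1/\sqrt{2}$ and for $\ell \geq 3$ strictly above $1$). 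Meanwhile $W(N) = 2^r$ depends only on the number of prime factors, not the exponents. So to get a worst case I would bound things by taking $N$ squarefree and built from the smallest possible primes.

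\medskip

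The core of the argument will be to separate the two sides' dependence on $N$. On the left, $\frac{\varphi(N)}{W(N)\sqrt N} = \prod_{i=1}^r \frac{\sqrt{\ell_i}(1-1/\ell_i)}{2} = \prod_{i=1}^r \frac{\ell_i - 1}{2\sqrt{\ell_i}}$. Each factor $\frac{\ell-1}{2\sqrt\ell}$ exceeds $1$ precisely when $\sqrt\ell - 1/\sqrt\ell > 2$, i.e. for $\ell \geq 5$, and the factors grow without bound as $\ell \to \infty$. The first few primes give factors $\frac{1}{2\sqrt 2}, \frac{2}{2\sqrt 3}, \frac{4}{2\sqrt 5}, \ldots$; only the primes $2$ and $3$ contribute factors below $1$. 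Thus I would show that over any set of $r \geq 13$ distinct primes the product $\prod \frac{\ell_i-1}{2\sqrt{\ell_i}}$ is minimized by taking the $r$ smallest primes, and then establish that this minimizing product grows at least like a fixed exponential in $r$ (since primes beyond $3$ all contribute factors $>1$ that themselves grow, the product picks up a multiplicative gain with each new prime). On the right side, $2(\log N + 1)$ grows only logarithmically in $N$, and since $N \geq \prod_{i=1}^r \ell_i$ is at least the product of the $r$ smallest primes (the primorial), $\log N$ is on the order of $\ell_r \sim r\log r$ by Mertens/Chebyshev-type estimates, i.e. sub-polynomial compared to the exponential growth on the left.

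\medskip

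Concretely, I would proceed as follows. First, verify the base case $r = 13$ by direct computation: take $N$ to be the product of the first $13$ primes (the primorial $P_{13} = 2\cdot 3\cdot 5\cdots 41$), compute $\varphi(N)$, $W(N) = 2^{13}$, and $\sqrt N \log N$ explicitly, and check the inequality numerically with room to spare. Second, prove the inductive step: I claim that replacing $N$ by $N\ell$ for a new prime $\ell$ (or raising an exponent) preserves the inequality, because the left side multiplies by $\frac{\ell-1}{2\sqrt\ell}$ (which for any new prime is at least $\frac{\ell_{14}-1}{2\sqrt{\ell_{14}}} = \frac{42}{2\sqrt{43}} > 3$, using that a genuinely new prime must be at least the $14$th prime $43$), while the right side multiplies by only $\frac{\log(N\ell)+1}{\log N + 1} = 1 + \frac{\log\ell}{\log N+1} < 2$ once $N$ is already large (which it is, being at least $P_{13}$). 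Since the left-hand multiplier dominates the right-hand multiplier in this regime, the inequality propagates. The main obstacle will be making the induction robust against the two competing ways $N$ can grow — adding new prime factors versus increasing exponents of existing ones — and ensuring the ratio bound $\frac{\log\ell}{\log N + 1}$ stays controlled when a large new prime is introduced while $N$ is still relatively small; handling that cleanly requires carefully tracking that each new distinct prime $\ell$ satisfies $\log \ell \ll \log N$ whenever $\omega(N)$ is already large, which follows from $N \geq P_{\omega(N)}$ and the primorial growth estimate $\log P_r \gg \ell_r$. I expect this interplay between the exponential gain on the left and the logarithmic cost on the right to be the technical heart of the argument, with everything else reducing to the single explicit base-case check at $r = 13$.
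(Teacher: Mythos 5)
Your skeleton --- reduce to squarefree $N$, write $\varphi(N)/\bigl(W(N)\sqrt N\bigr)$ as the product $\prod_i\frac{\ell_i-1}{2\sqrt{\ell_i}}$, and anchor everything at the $13$th primorial --- matches the paper's, but there is a genuine gap in how you handle the fact that \emph{both} sides of the inequality depend on the same $N$. Minimizing $\prod_i\frac{\ell_i-1}{2\sqrt{\ell_i}}$ over all squarefree $N$ with $\omega(N)=r$ does pick out $r_1\cdots r_r$, but the right-hand side $2(\log N+1)$ is \emph{minimized}, not maximized, there; so checking the inequality at the primorial says nothing about, say, $N=2\cdot 3\cdots 37\cdot Q$ with $Q$ a huge prime, where the right-hand side has grown by $\log Q$ relative to your base case. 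Your induction does not reach such $N$ either: building up from $P_{13}=r_1\cdots r_{13}$ by adjoining new primes only produces multiples of all of $2,3,\dots,41$, and the claim that a genuinely new prime must be at least $43$ is false for general $N$ with $\omega(N)\ge 13$ (if $N$ omits $5$, adjoining $5$ multiplies your left-hand side by $\tfrac{4}{2\sqrt5}\approx 0.894<1$ while increasing the right-hand side). Closing the gap requires an extra move --- replacing a prime factor $q$ by a larger prime $q'$ --- together with the verification that the left multiplier beats the right multiplier $1+\frac{\log(q'/q)}{\log N+1}$; this is true, since $f(x)=(x-1)/\sqrt x$ satisfies $(\log f)'(x)=\frac{x+1}{2x(x-1)}>\frac1{2x}$, whence $f(q')/f(q)>(q'/q)^{1/2}>1+\tfrac12\log(q'/q)$, but it is precisely the step your write-up omits, and without it the argument does not cover all $N$ in the statement.

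The paper sidesteps this issue entirely with one extra observation: for $N\ge r_1\cdots r_{13}$ one has $2(\log N+1)<N^{1/6}$, so it suffices to prove the fully multiplicative inequality $\varphi(N)\ge N^{2/3}W(N)$, i.e.\ $\prod_i\frac{q_i-1}{2q_i^{2/3}}\ge 1$. The right-hand side is now the constant $1$, so minimizing the left side at the primorial is legitimate, the reduction to squarefree $N$ becomes the one-line computation $\varphi(qM)=q\varphi(M)>(qM)^{2/3}W(qM)$ for $q\mid M$, and the whole lemma reduces to checking $\prod_{i=1}^{13}\frac{r_i-1}{2r_i^{2/3}}>1$. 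Two smaller slips in your draft: the factor $\frac{\ell-1}{2\sqrt\ell}$ exceeds $1$ only for $\ell>3+2\sqrt2$, so $\ell=5$ also contributes a factor below $1$ (not just $2$ and $3$); on the other hand your numerical base case at $N=r_1\cdots r_{13}$ does hold, with roughly a factor of $4$ to spare, so the anchor itself is sound.
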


\begin{proof} Denote by $r_1<r_2<\ldots$ the sequence of (all) primes. Note that by assumption $$N\ge N_0=\prod_{i=1}^{13} r_i=304250263527210.$$ We note that for $N\ge N_0$ we have $N^{1/6}>2(\log N+1)$ (indeed the function $x^{1/6}/(\log x+1)$ is increasing for $x>e^5$ and $N_0^{1/6}> 2(\log N_0+1)$), hence it is enough to show $\varphi(N)\ge N^{2/3} W(N)$ for $N$ with $\omega(N)\ge 13$ (which automatically implies $N\ge N_0$).

Now if $\varphi(M)\ge M^{2/3} W(M)$ for some $M$ and $q|M$ is a prime then $ W(M)= W(qM)$ and 
$$\varphi(qM)=q\varphi(M)> q^{2/3}M^{2/3} W(M)=(qM)^{2/3} W(qM),$$ so it is enough to prove the assertion for squarefree $N$.

If $N=q_1\cdots q_k,k\ge 13$ is squarefree ($q_i$ prime) then
\begin{equation}\label{eq:phi_om_ratio}\frac{\varphi(N)}{N^{2/3} W(N)}=\prod_{i=1}^k\frac{q_i-1}{2q_i^{2/3}},\end{equation}
the function $(x-1)/2x^{2/3}$ is increasing for $x>1$ and $>1$ at $x=11$, so for given $k$ the ratio $\varphi(N)/N^{2/3} W(N)$ is minimized when $N=r_1\cdots r_k$ and since for $k=13$ the ratio is $>1$, by (\ref{eq:phi_om_ratio}) this remains the case for all $k\ge 13$, which concludes the proof.

\end{proof}

Now (\ref{eq:count2}) combined with the last lemma concludes the proof of Proposition \ref{prop:legendre} in the case $\omega(p+1)\ge 13$.

\subsection{Refining the sieve}

To handle the case $\omega(p+1)<13$ we need to refine our sieve. The main idea is to reduce the error term coming from the $\xi(d)$ by reducing the number of $d$ participating in the sieve. The price for this is a less tight main term, but it will improve the overall estimate with a correct choice of parameters, provided $\omega(p)<13$ and $p>7\cdot 10^7$. 

For $k|p+1$ denote (the second equality is by inclusion-exclusion),
$$
F(k)=\#\left\{a\in\field{p}:\LegendreP{a(a-1)}{p}=\sigma, (a,k)=1\right\}=\sum_{d|k}\mu(d)N(d).
$$
Now assume that $k|p+1$ is squarefree and write $$k=q_1\cdots q_l,\quad\rad(p+1)=k\cdot p_1\cdots p_s,$$ where $q_1<\ldots <q_l, \,p_1<\ldots<p_s$ are distinct primes and $\rad(n)$ denotes the radical of $n$ (the product of distinct primes dividing $n$). We have
$$
F(p+1)\ge F(k)-\sum_{i=1}^s (F(k)-F(kp_i))=\sum_{i=1}^s F(kp_i)-(s-1)F(k).
$$
and by (\ref{eq:nd_est}),
\begin{align*}
F(kp_i)=\sum_{d|kp_i}\mu(d)N(d)=\sum_{d|k}\mu(d)(N(d)-N(dp_i))=\sum_{d|k}\frac{\mu(d)(p+1)}{2d}
\left(
1-\frac 1{p_i}
\right)
+\sum_{d|k}\mu(d)(\xi(d)-\xi(dp_i)).
\end{align*}
and similarly
$$F(k)=\sum_{d|k}\frac{\mu(d)(p+1)}{2d}+\sum_{d|k}\mu(d)\xi(d).$$
Using the estimate $\xi(d)\le\sqrt p(\log p+1)$,
\begin{align*}
\abs{
F(kp_i)-
\left(
1-\frac{1}{p_i}
\right)
F(k)
}
\le \sum_{d|k}\mu^2(d)\left(1+\frac 1p_i\right)\sqrt p(\log p+1)
= (1+1/p_i) W(k)\sqrt{p}(\log p+1) 
\end{align*}
Thus, we get 
\begin{multline}\label{eq:fk_est1}
F(p+1)\ge\sum_{i=1}^s F(kp_i)-(s-1)F(k)
\ge \sum_{i=1}^s\left(F(kp_i)-\left(1-\frac{1}{p_i}\right)F(k)\right)+\left(1-\sum_{i=1}^s\frac{1}{p_i}\right)F(k)\ge\\
\ge \left(1-\sum_{i=1}^s\frac{1}{p_i}\right)F(k)-\left(s+\sum_{i=1}^s\frac 1{p_i}\right) W(k)\sqrt{p}(\log p+1).
\end{multline}

By a similar calculation to (\ref{eq:count2}) we have 
\begin{equation}\label{eq:fk_est2}
F(k)\ge \frac{p\varphi(k)}{2k}-\sqrt{p}(\log p+1)  W(k).
\end{equation}
Combining (\ref{eq:fk_est1}) and (\ref{eq:fk_est2}) and denoting $\delta=1-\sum_{i=1}^s\frac{1}{p_i}$ we get 
$$
F(p+1)\ge
\frac{\delta p\varphi(k)}{2k}-
(s+1)
\sqrt{p}(\log p+1) W(k)
$$
So, if for some $p\ge 13$ and squarefree $k|p+1$ we can show
$$
\frac{\delta p\varphi(k)}{2k}>
(s+1)
\sqrt{p}(\log p+1) W(k)=(s+1)
\sqrt{p}(\log p+1)2^l,
$$
where $\delta=1-\frac 1{p_1}-\ldots -\frac 1{p_s}$ and $p_1,\ldots,p_s$ are the distinct prime divisors of $p+1$ not dividing $k$,
the assertion of Proposition $\ref{prop:legendre}$ would follow for this $p$. The last inequality is equivalent to
\begin{equation}\label{eq:sieve_cond}\frac{\sqrt{p}}{\log p+1}>\frac{(s+1)2^{l+1}}{\left(1-\sum_{i=1}^s\frac 1{p_i}\right)\prod_{j=1}^l\left(1-\frac 1{q_j}\right)}.\end{equation}

\subsection{The case $9\le\omega(p+1)\le 12$}\label{sec:intermediate_range}

We keep the setting and notation of the previous subsection and denote by $r_1,r_2,\ldots$ the complete sequence of primes. We now make the additional assumption that $q_j<p_i$ for all $i,j$ (so $k$ is the product of the $l$ smallest prime divisors of $p+1$). We note that for fixed $s,l$ the right hand side of (\ref{eq:sieve_cond}) is decreasing in each $p_i,q_j$ (as long as $\delta>0$) and therefore the right hand side of (\ref{eq:sieve_cond}) is bounded from above by
$$R_{l,s}=\frac{(s+1)2^{l+1}}{\left(1-\sum_{i=l+1}^{l+s}\frac 1{r_i}\right)\prod_{j=1}^l\left(1-\frac 1{r_j}\right)},$$
as long as $R_{l,s}>0$. On the other hand the left hand side of (\ref{eq:sieve_cond}) is bounded from below by 
$$L_{l,s}=\frac{\sqrt{r_1\cdots r_{s+l}}}{\log\left(r_1\cdots r_{s+l}\right)+1}.$$ Whenever $L_{l,s}>R_{l,s}>0$ it is guaranteed that the inequality (\ref{eq:sieve_cond}) is satisfied whenever $\omega(p+1)=l+s$ and Proposition \ref{prop:legendre} holds for $p$.

In the table below for each value of $1\le n\le 12$ we list the choice of $l,s$ with $l+s=n$ minimizing $R_{l,s}>0$  and the corresponding values of $R_{l,s},L_{l,s}$ (rounded down to 3 decimal places).
\quad
\begin{center}
\begin{tabular}{|l|l|l|l|l|}
\hline
$n$ & $l$ & $s$ & $R_{l,s}$ & $L_{l,s}$\\
\hline
$1$ & $0$ & $1$ & $8$ & $0.835$\\
$2$ & $1$ & $1$ & $24$ & $0.877$\\
$3$ & $1$ & $2$ & $51.428$ & $1.244$\\
$4$ & $1$ & $3$ & $98.823$ & $2.283$\\
$5$ & $2$ & $3$ & $169.541$ & $5.495$\\
$6$ & $2$ & $4$ & $245.242$ & $15.322$\\
$7$ & $2$ & $5$ & $334.504$ & $50.519$\\
$8$ & $2$ & $6$ & $444.614$ & $182.262$\\
$9$ & $2$ & $7$ & $574.201$ & $738.575$\\
$10$ & $2$ & $8$ & $720.253$ & $3409.625$\\
$11$ & $2$ & $9$ & $896.738$ & $16571.694$\\
$12$ & $2$ & $10$ & $1097.213$ & $88920.402$\\
\hline
\end{tabular}
\end{center}
\quad

We see that for $9\le n\le 12$ there is a suitable choice of $l$ such that $L_{l,s}>R_{l,s}>0$ and by the observations above this concludes the proof for the case $9\le\omega(p+1)\le 12$.

\subsection{The remaining primes}

It remains to deal with the case $\omega(p+1)\leq 8$. Looking again at the table from the previous subsection we see that for $n\le 8$ one can choose suitable $l,s$ with $l+s=n$ such that $0<R_{l,s}<445$. By the same reasoning used in the previous subsection, the assertion of Proposition \ref{prop:legendre} holds for $p$ whenever $\omega(p+1)=n=l+s$ and $\frac{\sqrt p}{\log p+1}>R_{l,s}>0$.

Note that the function $\frac {\sqrt x}{\log x+1}$ is increasing for $x>e$ and for $x=7\cdot 10^7$ 
we have $\frac {\sqrt x}{\log x+1}=464.164\ldots>445$, hence counterexamples to Proposition 
\ref{prop:legendre} can only occur with $p<7\cdot 10^7$. By similar reasoning counterexamples with 
$\omega(p+1)\le 5$ can only occur with $p<y=9\cdot 10^6$, because $\frac{\sqrt y}{\log y+1}
=177.944\ldots$ exceeds all the values of $R_{l,s}$ corresponding to $n\le 5$ in the table above.

To conclude the proof for the range $p<7\cdot 10^7$ we have run a SageMath script that runs through all primes $13< p<7\cdot 10^7$ with $6\le\omega(p+1)\le 8$ and the primes $13<p<9\cdot 10^6$ with $\omega(p+1)\le 5$, and for each $p$ goes over $a=1,2,3,\ldots$ until it finds an $a$ with $\LegendreP {a(a-1)}p=1$ and $(a,p+1)=1$ and another one with $\LegendreP {a(a-1)}p=-1$ and $(a,p+1)=1$. For all primes in the range suitable values of $a$ were found, which concludes the proof of Proposition \ref{prop:legendre}. The runtime for the script was about 10 hours on a standard 4 GHz desktop PC. An implementation in a low-level programming language like C would surely run much faster, but we have not attempted it.

The SageMath scripts used for generating the table in the previous subsection and checking the range $13<p<7\cdot 10^7$ appear in the appendix below. 

\appendix 

\section{SageMath code}

\subsection{Code generating the table in subsection \ref{sec:intermediate_range}}

\begin{verbatim}

# Primes is the list of first 100 primes
# RR is the field of real numbers

Primes = list(primes(1000))[:100]
RR = RealField()

# the function R(l,s) computes the value of R_{l,s}

def R(l, s):
    qlist = Primes[:l]      
    plist = Primes[l:l+s]
    return RR((s+1)*2^(l+1)/(prod([1-1/q for q in qlist])*(1-sum([1/p for p in plist]))));
    
# for given n find l that (with s=n-l) minimizes R_{l,s} (if positive) and return l, R_{l,s}

def best_R(n):
    return min([(R(l, n-l),l) for l in range(n+1) if R(l, n-l)>0])

# the function L(l,s) computes the value of L_{l,s}

def L(l, s):
    P = product(Primes[:s+l])
    return RR(sqrt(P)/(log(P)+1))
    
# for each 1 <= n <= 11 find the l that minimizes R_{l,s} and print out table row    
    
for n in range(1,12):
    (R_value, l) = best_R(n)
    s = n - l
    print n, l, s, R_value, L(l, s)
\end{verbatim}

\subsection{Code checking the range $13<p<7\cdot 10^7$}

\begin{verbatim}

X = 7*10^7       # counterexamples with 6<=omega(p+1)<=8 must be < X
Y = 9*10^6       # counterexamples with omega(p+1)<6 must be < Y

Primes = list(primes(X))  #list of primes < X       

# computing the values of omega(m) for m <= X

omega = [0 for i in range(X+1)]
for p in Primes:
    for i in range(0,X+1,p):
        omega[i] += 1
        
# go over all primes p<X with 6<=omega(p+1)<=8 and p<Y with omega(p+1)<6 and 
# search exhaustively for suitable (odd) values of a.  
# if suitable values of a are not found (for sigma=1 or sigma=-1) then
# the counterexample p is printed
    
for p in Primes:
    if p<=13:
        continue
    if p>Y and omega[p+1]<6:
        continue
    if omega[p+1]>8:
        continue
    a_found_plus = false       # flag indicating suitable a with sigma = 1  has been found
    a_found_minus = false      # flag indicating suitable a with sigma = -1 has been found
    for a in range(3,p,2):     # going over odd 3 <= a <p
        if gcd(a,p+1) != 1:
            continue
        L = legendre_symbol(a*(a-1),p)
        if L == 1:
            a_found_plus = true
        else:
            a_found_minus = true
        if a_found_plus and a_found_minus:
            break
    if a_found_plus and a_found_minus:
        continue
    print 'COUNTEREXAMPLE: ', p
    
\end{verbatim}

{\bf Acknowledgments.} The authors would like to thank the anonymous referee of a previous version of this paper for pointing out the polynomial (\ref{eq:new pol}) to us, which simplified and slightly improved our main results in the important case $n=p+1$. The authors would also like to thank Ze\'ev Rudnick for his remarks on a previous version of the paper. The present work used the SageMath and Magma Calculator platforms to perform calculations and computer searches necessary for some of the proofs.
Both authors were partially supported by a grant of the Israel Science Foundation no. 2507/19. 

\bibliography{../../Tex/mybib}
\bibliographystyle{amsrefs}

\end{document}